\newtheorem{Theorem}{Theorem}[section]
\newtheorem{Lemma}[Theorem]{Lemma}
\newtheorem{Corollary}[Theorem]{Corollary}
\newtheorem{Remark}[Theorem]{Remark}
\numberwithin{equation}{section}
\begin{document}

\def\le{\left}
\def\r{\right}
\def\cost{\mbox{const}}
\def\a{\alpha}
\def\d{\delta}
\def\ph{\varphi}
\def\e{\varepsilon}
\def\la{\lambda}
\def\si{\sigma}
\def\La{\Lambda}
\def\B{{\cal B}}
\def\A{{\mathcal A}}
\def\L{{\mathcal L}}
\def\O{{\mathcal O}}
\def\bO{\overline{{\mathcal O}}}
\def\F{{\mathcal F}}
\def\K{{\mathcal K}}
\def\H{{\mathcal H}}
\def\D{{\mathcal D}}
\def\C{{\mathcal C}}
\def\M{{\mathcal M}}
\def\N{{\mathcal N}}
\def\G{{\mathcal G}}
\def\T{{\mathcal T}}
\def\R{{\mathbb R}}
\def\I{{\mathcal I}}

\def\bw{\overline{W}}
\def\phin{\|\varphi\|_{0}}
\def\s0t{\sup_{t \in [0,T]}}
\def\lt{\lim_{t\rightarrow 0}}
\def\iot{\int_{0}^{t}}
\def\ioi{\int_0^{+\infty}}
\def\ds{\displaystyle}
\def\pag{\vfill\eject}
\def\fine{\par\vfill\supereject\end}
\def\acapo{\hfill\break}

\def\beq{\begin{equation}}
\def\eeq{\end{equation}}
\def\barr{\begin{array}}
\def\earr{\end{array}}
\def\vs{\vspace{.1mm}   \\}
\def\rd{\reals\,^{d}}
\def\rn{\reals\,^{n}}
\def\rr{\reals\,^{r}}
\def\bD{\overline{{\mathcal D}}}
\newcommand{\dimo}{\hfill \break {\bf Proof - }}
\newcommand{\nat}{\mathbb N}
\newcommand{\E}{\mathrm E}
\newcommand{\Pro}{\mathbb P}
\newcommand{\com}{{\scriptstyle \circ}}
\newcommand{\reals}{\mathbb R}

\def\Amu{{A_\mu}}
\def\Qmu{{Q_\mu}}
\def\Smu{{S_\mu}}
\def\H{{\mathcal{H}}}
\def\Im{{\textnormal{Im }}}
\def\Tr{{\textnormal{Tr}}}
\def\E{{\mathrm{E}}}
\def\P{{\mathbb{P}}}
%\def\span{{\textnormal{span}}}
%opening

\title{On dynamical systems perturbed by a null-recurrent fast motion: The continuous coefficient case with independent driving noises}

\author{Zsolt Pajor-Gyulai, Michael Salins\\
\vspace{.1cm}\\
Department of Mathematics\\
 University of Maryland\\
College Park\\
 Maryland, USA
}

\date{}

\maketitle

\begin{abstract}
An ordinary differential equation perturbed by a null-recurrent diffusion will be considered in the case where the averaging type perturbation is strong only when a fast motion is close to the origin. The normal deviations of these solutions from the averaged motion are studied, and a central limit type theorem is proved. The limit process satisfies a linear equation driven by a Brownian motion time changed by the local time of the fast motion.
%\keywords{Averaging \and Null-recurrent fast motion \and Brownian Local time \and  Normal deviations}
%\subclass{60H10 \and  60J60 \and 60F05}
\end{abstract}

\section{Introduction}
Many mathematical models arising from physics, biology, finance or other areas of science involve subsystems evolving on different time scales. Often, there is a fast and a slow component and the limiting behavior of the latter is an interesting non-trivial problem.

One possible setting is a system of diffusion processes $(X^{\varepsilon}(t),Y^{\varepsilon}(t))\in \mathbb{R}^{1+d}$ satisfying the stochastic differential equation
\begin{align*}
dX^{\e}(t)&=\frac{1}{\e^2}\phi(X^{\e}(t),Y^{\e}(t))dt+\frac{1}{\varepsilon}\varphi(X^{\e}(t),Y^{\e}(t))dW(t),&X(0)=x_0,\\
dY^{\e}(t)&=b(X^{\e}(t),Y^{\e}(t))dt+\sigma(X^{\e}(t),Y^{\e}(t))dW(t),&Y(0)=y_0,
\end{align*}
where $b$ is a $d$-dimensional vector function and $\phi$ is a one-dimensional vector function, $W(t)$ is an $r$-dimensional Brownian motion, and $\varphi$, $\sigma$ are $1\times r$ and $d\times r$ matrix valued functions respectively. This process depends on a parameter $\varepsilon$ representing the ratio of the two time scales. In other words, $X^{\varepsilon}$ changes faster and faster in time as $\varepsilon\to 0$, while $Y^{\e}$ changes on the same time scale for all values of $\e$. $X^{\e}$ and $Y^{\e}$ are referred to as the fast and the slow component respectively.

The case when the fast motion has a finite invariant measure $\mu$ was first studied by Khasminskii (\cite{K68}) and is well understood by now. He proved that, as $\varepsilon\to 0$, the law of the slow component $Y^{\e}$ approaches that of a limiting diffusion $\bar{Y}$, and one can obtain the effective drift and diffusion coefficients of $\bar{Y}$ by averaging $b$ and $\sigma$ in the first variable with respect to $\mu$. This result was later extended and refined by a vast number of authors (see e.g \cite{FW12},\cite{GS82},\cite{K04},\cite{KY04},\cite{P77},\cite{S89}, or the monograph \cite{PS08}).

Much less is known about what happens when the fast motion does not posess a finite invariant measure. In the case when the process is null recurrent, and there is a $\sigma$-finite invariant measure, naive intuition would suggest that the limit (if it exists) would be a diffusion with coefficients averaged with respect to this measure. However, this is false due to the fact that $X^{\e}$ spends most of the time in the neighborhood of infinity. It was shown in \cite{KK04} that if $\phi=0$ and there exist constants $c_1,c_2\in (0,\infty)$ such that
\[
c_1\leq\sum_{i=1}^r\varphi_i^2(x,y)\leq c_2\qquad \forall (x,y)\in\mathbb{R}^{1+d},
\]
then the asymptotic behavior of $Y^{\e}$ is governed by values of $b$ and $\sigma$ when $|x|$ is very large (under the assumption that they are non-zero). As, in general, these values can be different for positive and negative values of $x$, $Y^{\e}$ does not converge to a Markov process. Indeed, it was shown that, under certain assumptions on the large $|x|$ behavior of $\varphi$, $b$, and $\sigma$, the pair $(X^{\e}(t),Y^{\e}(t))$ converges weakly to a $(1+d)$-dimensional diffusion with diffusion coefficient discontinous at $x=0$.

In this paper, we are interested in the case when $\phi=0$, $b(x,y)=b_1(y)+b_2(x,y)$ where $b_2$ and $\sigma$ are very small as $|x|\to\infty$. The result cited above (\cite{KK04}) then implies that $Y^{\e}$ converges to the solution of the ordinary differential equation $\dot{y}=b_1(y)$ (see also Lemma \ref{lem:Y_close_to_y}), and the slow motion can be considered as a perturbation of this ODE. This situation can be intuitively understood by noting that a typical trajectory of the process $X^{\epsilon}$ is of order $1/\varepsilon$, and therefore $b_2$ and $\sigma$ are small. This implies that $Y^{\varepsilon}(t)$ cannot deviate significantly from the unperturbed solution on a finite time scale. We show that the first correction term is $\mathcal{O}(\e^{1/2})$ and study the limiting behavior of the process $\varepsilon^{-1/2}(Y^{\e}(t)-y(t))$ where $y(t)$ is the solution of the unperturbed system. The main ingredient is that the bulk of the deviation comes from the displacement when $X^{\e}$ is at distance $\mathcal{O}(1)$ from the origin, which suggests that the natural time scale to look at is defined by the local time $L^{X}(t,x)$ at $x=0$. This implies that in order to obtain a Markov process in the limit, it is necessary to keep track of both component. Indeed, we derive a limit theorem for the pair $(\e X^{\e},Y^{\e})$.

In the general case, the large $|x|$ behavior of $\varphi$ can be different depending on the sign of $x$. This creates a discontinuity  in the diffusion coefficient of the limit of $\e X^{\e}$ and one has to consider convergence to a limiting process with a certain boundary behaviour at $x=0$. Another difficulty is posed by the fact that the noise driving the fast and the slow motion are the same. In the absence of these complications, however, a proof using only elementary stochastic calculus is possible. Our result captures the phenomenon without much technical difficulties. Therefore, we consider a simplified system in this paper and return to the general case in an upcoming publication.

One motivation to study these systems is to describe certain systems with an interface where significant transport is possible only in a thin layer, see e.g. \cite{HM11} for a recent result.

This paper is organized as follows. After we state our precise result in Section \ref{sec:main_result}, we prove some preliminary lemmas in Section \ref{sec:aux_lemma}. In Section \ref{sec:reducing_to_simple}, we show that the case $\varphi\equiv 1$ can be reduced to a simpler problem which is solved in Section \ref{sec:proof_of_main_result}. We extend the result to non-constant $\varphi$ in Section \ref{sec:non_unit_varphi}.

\section{The main result}\label{sec:main_result}

We begin by stating the result in the special case where the fast motion is a Brownian motion. Let $W_1$ and $W_2$ be independent one and $d$-dimensional Brownian motions and consider the following $d$-dimensional non-homogeneous stochastic differential equation
\begin{align}
\label{eq:Y_eq} dY^{\varepsilon}(t)=[b_1(Y^{\varepsilon}(t))+b_2(\epsilon^{-1}W_1(t),Y^{\varepsilon}(t))]dt+\sigma(\epsilon^{-1}W_1(t),Y^{\varepsilon}(t))dW_2(t),
\end{align}
with initial condition $Y^{\varepsilon}(0)=y_0$.

Assume that
\[
\hat{b}(x):=\sup_{y\in\mathbb{R^d}}|b_2(x,y)|_{\reals^d}\in L^1(\mathbb{R}),\qquad \hat{\sigma}^2(x)=\sup_{y\in\mathbb{R^d}}\Tr\sigma\sigma^T(x,y)=\sup_{y\in\mathbb{R^d}}\sum_{i,j=1}^d\sigma_{ij}^2(x,y) \in L^1(\mathbb{R}).
\]
We also assume that $b_2(x,y)$ and $\sigma(x,y)$ are globally Lipschitz continuous in $x$ and $y$, and that $b_1(y)$ is twice continuously differentiable with bounded derivatives. It follows from the above that the ordinary differential equation
\[
\frac{dy}{dt}=b_1(y(t)),\qquad y(0)=y_0,
\]
which serves as the unperturbed part of the slow motion, has a  unique solution defined for all times.

To describe the limiting process, let us introduce the process $V(t)=\bar{W}_2(L^{\bar{W}_1}(t,0))$ where $\bar{W}_1$ and $\bar{W}_2$ are independent $1$ and $d$ dimensional Brownian motions respectively and $L^{\bar{W}_1}(t,0)$ is the local time of $\bar{W}_1$ at $0$. It is a continuous, non-Markovian process that only grows on a set of Lebesgue measure zero with probability one.  Note that the non-Markovity is rather innocent in this case as the pair $(\bar{W}_1,V)$ is Markovian. Also note that the conditional law $V|\bar{W}_1$ is Gaussian.

\begin{Remark}
We mention that $V$ is a known process, in the literature it is called $1/2$-fractional kinetic process and it appears as the scaling limit of certain randomly trapped random walks (see \cite{BAC07}). The connection is intuitively explained by considering the time the fast process spends away from the origin as a trapping for the slow component with a heavy tail trapping time (due to null-recurrence).

As another example, Brownian motion time changed by the local time of a more complicated process was obtained as the limit of a diffusion in a cellular flow (\cite{HKPGY14}).
\end{Remark}

\begin{Theorem}\label{main_result2}
The law of the process $\zeta^{\e}(t)=\varepsilon^{-1/2}(Y^{\varepsilon}(t)-y(t))$ converges in distribution in $C([0,\infty),\mathbb{R}^d)$ to the solution of the stochastic differential equation
\begin{align}\label{eq:the_solution}
  d\zeta^0(t)=D_xb_1(y(t))\zeta^0(t)dt+\sqrt{\left(\int_{-\infty}^\infty (\sigma\sigma^T)(x,y(t))dx\right)} dV(t),\qquad
  \zeta^0(0)=0,
\end{align}
where $V(t)$ is as above, $\sqrt{\cdot}$ denotes the matrix square root and $D_x b_1(x)$ is the derivative tensor of the vector field $b_1$ at $x\in\mathbb{R}^d$, i.e $(D_xb_1(x))_{ij}=\partial(b_1)^{i}/\partial x^j$. The space $C([0,\infty),\reals^d)$ is endowed with the topology of uniform convergence on bounded sets $[0,T]$ for any $T>0$.
\end{Theorem}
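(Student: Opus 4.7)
The plan is to decompose the dynamics of $\zeta^\e$, handle each piece by Taylor expansion, occupation-time arguments, and a martingale central limit theorem, then close the loop with Gronwall. Differentiating $\zeta^\e(t)=\e^{-1/2}(Y^\e(t)-y(t))$ and expanding $b_1$ around $y(s)$ yields
\[
\zeta^\e(t)=\int_0^t D_xb_1(y(s))\zeta^\e(s)\,ds + R_1^\e(t) + I^\e(t) + M^\e(t),
\]
where $R_1^\e$ is the quadratic Taylor remainder (bounded by $\e^{1/2}\|D^2b_1\|_\infty\int_0^t|\zeta^\e(s)|^2\,ds$), $I^\e(t)=\e^{-1/2}\int_0^t b_2(\e^{-1}W_1(s),Y^\e(s))\,ds$ is the averaging drift, and $M^\e(t)=\e^{-1/2}\int_0^t\sigma(\e^{-1}W_1(s),Y^\e(s))\,dW_2(s)$ is the martingale.

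The key tool throughout is the occupation-time identity
\[
\int_0^t f(\e^{-1}W_1(s))\,ds=\e\int_{-\infty}^\infty f(a)\,L^{W_1}(t,\e a)\,da,
\]
obtained from the change of variable $w=\e a$. Applied to $\hat b$ it gives $|I^\e(t)|\leq\e^{1/2}\|\hat b\|_{L^1}\sup_{a\in\reals}L^{W_1}(t,a)\to 0$ almost surely, uniformly on compacts. Applied to $\sigma\sigma^T$ (discretizing $[0,t]$ into short intervals to freeze $Y^\e(s)\approx y(t_k)$ and using joint continuity of $a\mapsto L^{W_1}(t,a)$ together with Lemma \ref{lem:Y_close_to_y} to replace $Y^\e$ by $y$) it yields
\[
\langle M^\e\rangle_t \xrightarrow{\P} \int_0^t\Sigma(y(s))\,dL^{W_1}(s,0),\qquad \Sigma(y):=\int_{-\infty}^\infty (\sigma\sigma^T)(x,y)\,dx.
\]
A uniform $\E\langle M^\e\rangle_T$ bound, derived from the same identity using $\E L^{W_1}(T,a)\leq \sqrt{2T/\pi}$ and $\hat\sigma^2\in L^1$, gives BDG-type control on $M^\e$.

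The main convergence step is a functional martingale CLT applied jointly to $(W_1,M^\e)$. Since $W_1$ and $W_2$ are independent, $\langle M^\e,W_1\rangle\equiv 0$, so any weak subsequential limit $(\bar W_1,\bar M)$ consists of a Brownian motion and an orthogonal continuous martingale with quadratic variation $\int_0^t\Sigma(y(s))\,dL^{\bar W_1}(s,0)$. Conditioning on $\bar W_1$ and applying the Dambis-Dubins-Schwarz-type representation produces an independent $d$-dimensional Brownian motion $\bar W_2$ such that $\bar M(t)=\int_0^t\sqrt{\Sigma(y(s))}\,dV(s)$, with $V(s)=\bar W_2(L^{\bar W_1}(s,0))$.

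Tightness of $\{\zeta^\e\}$ is inherited from the decomposition: tightness of $\{M^\e\}$ is standard given the quadratic-variation bounds, the drift term is Gronwall-controlled once an $\e$-uniform a priori bound on $\sup_{[0,T]}|\zeta^\e|$ is extracted from the same equation, and $R_1^\e,I^\e$ vanish in probability. Along any convergent subsequence the limit $\zeta^0$ satisfies \eqref{eq:the_solution}; pathwise uniqueness of this linear SDE identifies the limit and upgrades subsequential convergence to full convergence in $C([0,\infty),\reals^d)$. The main technical obstacle is the identification of $\langle M^\e\rangle_t$: one must simultaneously handle the $\e$-dependent rescaling $L^{W_1}(\cdot,\e a)\to L^{W_1}(\cdot,0)$ uniformly in the $da$-integration and the replacement of $Y^\e$ by $y$ in $\sigma\sigma^T$, against only the $L^1$-integrability and Lipschitz assumptions on $\sigma$.
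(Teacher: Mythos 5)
Your proposal is correct in outline but takes a genuinely different route at the central step. The paper first replaces $Y^\e$ by the auxiliary process $Z^\e$ driven by $J^\e(t)=\int_0^t\sigma(\e^{-1}W_1(s),y(s))\,dW_2(s)$ (Lemma \ref{lem:reducing_to_simple}), precisely so that, conditioned on $W_1$, the integrand is deterministic and $\e^{-1/2}J^\e$ is \emph{exactly} Gaussian; the limit law is then read off by elementary characteristic-function computations from the almost sure convergence of the conditional covariance $\e^{-1}\int_0^t(\sigma\sigma^T)(\e^{-1}W_1(s),y(s))\,ds$ to $\int_0^t\Sigma(y(s))\,L^{W_1}(ds,0)$ (Lemma \ref{lem:easiest} together with Corollary \ref{cor:stoch-int-exist}), with no martingale CLT invoked anywhere. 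You instead keep $\sigma(\e^{-1}W_1(s),Y^\e(s))$ inside the martingale $M^\e$ and appeal to a joint (stable) functional martingale CLT, using $\langle M^\e,W_1\rangle\equiv 0$ to identify any subsequential limit as a conditionally Gaussian martingale orthogonal to $\bar W_1$. This is viable and even bypasses the paper's reduction to $Z^\e$, but it buys that economy at the price of a nontrivial external theorem: the limiting bracket $\int_0^t\Sigma(y(s))\,dL^{\bar W_1}(s,0)$ is random, so the standard MCLT does not apply directly, and the representation $\bar M=\int_0^t\sqrt{\Sigma(y(s))}\,dV(s)$ requires an enlargement of the probability space (the time change $L^{\bar W_1}(\cdot,0)$ is constant off the zero set of $\bar W_1$ and $\Sigma(y(s))$ may be degenerate, so a bare Dambis--Dubins--Schwarz argument is not enough) --- details you should spell out. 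Your identification of $\langle M^\e\rangle_t$ (occupation-time formula, $L^{W_1}(t,\e a)\to L^{W_1}(t,0)$ locally uniformly, an $L^1$ tail cutoff in $x$, and Lipschitz replacement of $Y^\e$ by $y$ on $\{|\e^{-1}W_1|\le N\}$) coincides with the paper's computations. Two minor corrections: the a priori bound on $\sup_{[0,T]}|\zeta^\e|$ must be obtained from the first-order Lipschitz estimate $\e^{-1/2}|b_1(Y^\e)-b_1(y)|\le \mathrm{Lip}(b_1)|\zeta^\e|$ \emph{before} introducing the second-order Taylor remainder, which involves $|\zeta^\e|^2$ and is not directly Gronwall-compatible; and your pathwise bound $|I^\e(t)|\le\e^{1/2}|\hat b|_{L^1(\reals)}\sup_{a}L^{W_1}(t,a)$ is a clean alternative to the paper's $L^p$ estimate in Lemma \ref{lem:mylemma} for that term.
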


As we will see in Section \ref{sec:aux_lemma}, integration with respect to the process $V(t)$ is well defined and the formula (\ref{eq:the_solution}) defines a well posed integral equation.

\begin{Remark} \label{rem:variation-of-parameters}
Note that our assumptions imply that equation (\ref{eq:the_solution}) has a unique solution which can be explicitly obtained by the variation of parameters formula
\[
\zeta^0(t)=\int_0^te^{\int_s^t D_x b_1(y(r))dr}\sqrt{\left( \int_{-\infty}^\infty (\sigma\sigma^T)(x,y(s)) dx \right)} dV(s).
\]
Also observe that $\zeta^0$ is not a Markovian process and in order to obtain one, the pair $(W_1,\zeta^0)$ has to be considered.
\end{Remark}

By a time-change argument, the result can be extended to some cases of non-constant $\varphi$. Let $\psi_1(y)$ and $\psi_2(x,y)$ be Lipschitz continus functions and suppose that
\begin{align}\label{eq:psi_cond}
&\sup_{y\in\mathbb{R}^d}\psi_2(.,y)\in L^1(\mathbb{R})  \cap L^2(\mathbb{R}),
&0<c_1\leq\psi_1(y) { + \psi_2(x,y)}\leq c_2<\infty
\end{align}
{ Notice that a consequence of the above assumptions is that
\[c_1 \leq \psi_1(y) \leq \psi_2(y)\]}
 Consider the system
\begin{align}
\label{eq:X_eq2} dX^{\varepsilon}(t)&=\frac{1}{\varepsilon}[\psi_1(Y^{\e}(t))+\psi_2(X^{\e}(t),Y^{\e}(t))]dW_1(t),\\
\label{eq:Y_eq2} dY^{\varepsilon}(t)&=[b_1(Y^{\varepsilon}(t))+b_2(X^{\varepsilon}(t),Y^{\varepsilon}(t))]dt+\sigma(X^{\varepsilon}(t),Y^{\varepsilon}(t))dW_2(t),
\end{align}
where $W_1$ and $W_2$ are again independent Brownian motions, $b_1, b_2$ are as in \eqref{eq:Y_eq} but assume that $b_1$ is bounded, and $\sigma$ satisfies
\begin{equation}\label{eq:new_sigma_cond}
\sup_{y\in\mathbb{R}}\frac{\Tr\sigma\sigma^T(.,y)}{(\psi_1(.)+\psi_2(.,y))^2}\in L^1(\mathbb{R})
\end{equation}
Let $\zeta^\e(t) = \e^{-1/2}(Y^\e(t) - y(t))$.

\begin{Corollary} \label{cor:non_unit_varphi}
There are independent Brownian motions (denoted again by $\tilde{W}_1$ and $\tilde{W}_2$) such that the process $(\varepsilon X^{\e}(t),\zeta^{\e}(t))$ converges weakly in $C([0,\infty),\mathbb{R}^{1+d})$ to $(X^0(t),\zeta^0(t))$ where
\begin{align*}
dX^0(t)&=\psi_1(y(t))d\tilde{W}_1,\\
d\zeta^0(t)&= D_y b_1(y(t))dt+\int_0^t\sqrt{\int_{-\infty}^{\infty}\frac{\sigma\sigma^T}{(\psi_1+\psi_2)^2}(x,y(s))dx}dV^{X^0}(s),
\end{align*}
where $V^{X^0}=\tilde{W_2}(L^{X^0}(t,0))$ and $L^{X^0}(t,0)$ is the local time of the $X^0$ process at zero.
\end{Corollary}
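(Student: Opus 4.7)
The proof uses a Dambis--Dubins--Schwarz time change to reduce the system \eqref{eq:X_eq2}--\eqref{eq:Y_eq2} to the setting of Theorem \ref{main_result2}. Set $\Psi := \psi_1 + \psi_2$ and $A^{\e}(t) := \int_0^t \Psi^2(X^{\e}(s), Y^{\e}(s))\, ds$, with inverse $B^{\e}$; the bound $c_1 \leq \Psi \leq c_2$ guarantees both maps are bi-Lipschitz. Define $\tilde{X}^{\e}(s) := \e X^{\e}(B^{\e}(s))$ and $\tilde{Y}^{\e}(s) := Y^{\e}(B^{\e}(s))$. By DDS, $\tilde{X}^{\e}(s) = \bar{W}_1(s)$ for a standard Brownian motion $\bar W_1$, and the time-changed slow component satisfies
\[
d\tilde{Y}^{\e}(s) = \frac{b_1(\tilde{Y}^{\e}) + b_2(\bar{W}_1/\e, \tilde{Y}^{\e})}{\Psi^2(\bar{W}_1/\e, \tilde{Y}^{\e})}\, ds + \frac{\sigma(\bar{W}_1/\e, \tilde{Y}^{\e})}{\Psi(\bar{W}_1/\e, \tilde{Y}^{\e})}\, d\tilde{W}_2(s),
\]
where $\tilde W_2$ is a Brownian motion satisfying $\langle \bar W_1, \tilde W_2\rangle \equiv 0$ (inherited from the independence of $W_1$ and $W_2$), hence independent of $\bar W_1$ by L\'evy's characterization.

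Writing the new drift as $\tilde b_1(\tilde Y^\e) + \tilde b_2(\bar W_1/\e, \tilde Y^\e)$ with $\tilde b_1(y) := b_1(y)/\psi_1(y)^2$, and setting $\tilde\sigma := \sigma/\Psi$, the assumptions \eqref{eq:psi_cond}, \eqref{eq:new_sigma_cond}, together with $\psi_1 \geq c_1$ (a consequence of \eqref{eq:psi_cond}), yield $\sup_y |\tilde b_2(\cdot, y)| \in L^1(\reals)$ and $\sup_y \Tr(\tilde\sigma\tilde\sigma^T)(\cdot, y) \in L^1(\reals)$. Theorem \ref{main_result2} then gives joint convergence $(\bar W_1, \tilde\zeta^\e) \Rightarrow (\bar W_1, \tilde\zeta^0)$, where $\tilde\zeta^\e(s) := \e^{-1/2}(\tilde Y^\e(s) - \tilde y(s))$, $\dot{\tilde y} = \tilde b_1(\tilde y)$, and $\tilde\zeta^0$ is the linear SDE driven by $\tilde V(s) := \tilde W_2(L^{\bar W_1}(s, 0))$. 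In parallel, $A^\e(t) \to A^0(t) := \int_0^t \psi_1(y(s))^2\, ds$ uniformly on compacts in probability: the contribution $\int_0^t [\psi_1^2(Y^\e) - \psi_1^2(y)]\, ds$ vanishes by Lemma \ref{lem:Y_close_to_y}, while $\int_0^t [2\psi_1\psi_2 + \psi_2^2](X^\e, Y^\e)\, ds = O(\e)$ by the standard occupation-time/local-time estimate, since $\psi_2 \in L^1(\reals) \cap L^2(\reals)$. A direct check shows $\tilde y \circ A^0 = y$, so $X^0(t) := \bar W_1(A^0(t)) = \int_0^t \psi_1(y(s))\, d\tilde W_1(s)$ via DDS, and $L^{X^0}(t, 0) = L^{\bar W_1}(A^0(t), 0)$, giving $V^{X^0}(t) = \tilde V(A^0(t))$.

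To return to the original time, decompose
\[
\zeta^\e(t) = \tilde\zeta^\e(A^\e(t)) + \e^{-1/2}\bigl(\tilde y(A^\e(t)) - y(t)\bigr).
\]
The first summand, jointly with $\e X^\e(t) = \bar W_1(A^\e(t))$, converges to $(X^0(t), \tilde\zeta^0(A^0(t)))$. For the second, Taylor expansion of $\tilde y$ about $A^0(t)$ combined with the expansion $\e^{-1/2}(A^\e(t) - A^0(t)) = \int_0^t 2\psi_1(y)\nabla\psi_1(y)\cdot \zeta^\e(s)\, ds + o(1)$ (the $\psi_2$-terms contributing only $O(\e^{1/2})$) yield in the limit the extra contribution $\tilde b_1(y(t))\int_0^t 2\psi_1(y(s))\nabla\psi_1(y(s)) \cdot \zeta^0(s)\, ds$. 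Adding the two and using the identity $D_y b_1 = \psi_1^2 D_y\tilde b_1 + 2\tilde b_1\psi_1 (\nabla\psi_1)^T$ (obtained by differentiating $b_1 = \tilde b_1\psi_1^2$) converts the effective drift from $\psi_1^2 D_y\tilde b_1 \cdot \zeta^0$ into $D_y b_1 \cdot \zeta^0$, establishing that $\zeta^0$ solves the SDE stated in the corollary.

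The main obstacle is this final reconciliation: because the time change depends on $Y^\e$, Theorem \ref{main_result2} after time change produces a limit whose drift coefficient is $D_y \tilde b_1$ rather than $D_y b_1$, and the extra term $\e^{-1/2}(\tilde y \circ A^\e - y)$ is exactly what restores the natural linearization. Showing rigorously that these two contributions combine into a well-posed linear SDE, rather than a Volterra-type equation, relies on the algebraic identity above together with maintaining joint tightness of $(\e X^\e, \zeta^\e)$ throughout; a secondary technical point is verifying the independence of the two DDS Brownian motions $\bar W_1$ and $\tilde W_2$ despite the nontrivial coupling through $Y^\e$.
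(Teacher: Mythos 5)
Your reduction via the Dambis--Dubins--Schwarz time change, the verification that the time-changed coefficients $\tilde b_2$ and $\tilde\sigma=\sigma/(\psi_1+\psi_2)$ satisfy the hypotheses of Theorem \ref{main_result2}, the independence of $\tilde W_1$ and $\tilde W_2$ via vanishing cross-variation and L\'evy's characterization, the convergence $A^\e\to A^0=\int_0^\cdot\psi_1^2(y)\,ds$, and the identification $L^{X^0}(t,0)=L^{\bar W_1}(A^0(t),0)$ all coincide with the paper's argument (which carries this out for $b_1\equiv 0$). Where you genuinely diverge is the treatment of $b_1\not\equiv 0$. The paper explicitly declines to time-change the drift, precisely because the time change turns $y(t)$ into the solution of a \emph{random} ODE; instead it writes the equation for $\zeta^\e$ in the original time, linearizes $b_1(y+\sqrt{\e}\zeta^\e)-b_1(y)$ there to get $D_yb_1(y)\zeta^0$ directly, kills the $b_2$ term by Lemma \ref{lem:mylemma}, and uses the time change only for the martingale term. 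You instead time-change everything, replace $y$ by the deterministic solution $\tilde y$ of $\dot{\tilde y}=b_1(\tilde y)/\psi_1^2(\tilde y)$ (correctly noting $\tilde y\circ A^0=y$), and restore the drift through the correction $\e^{-1/2}(\tilde y\circ A^\e-y)$ and the product-rule identity $D_yb_1=\psi_1^2D_y\tilde b_1+2\tilde b_1\psi_1(\nabla\psi_1)^T$. I checked that this algebra does close up into the stated linear SDE (the term $D_y\tilde b_1\cdot b_1\cdot g$ arising from differentiating $\tilde b_1(y(t))$ is exactly $\psi_1^2D_y\tilde b_1$ applied to the correction term), so the scheme is coherent; the paper's route is simpler because no such reconciliation is needed.

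There is, however, a genuine gap relative to the stated hypotheses: your argument differentiates $\psi_1$, which the paper assumes only to be Lipschitz. You need $\psi_1\in C^1$ to extract the limit
\[
\e^{-1/2}\bigl(A^\e(t)-A^0(t)\bigr)\to\int_0^t 2\psi_1(y(s))\nabla\psi_1(y(s))\cdot\zeta^0(s)\,ds
\]
(with only Lipschitz $\psi_1$ this quantity is $O(1)$ but need not converge), and you need $\psi_1\in C^2$ with bounded derivatives so that $\tilde b_1=b_1/\psi_1^2$ is an admissible drift for Theorem \ref{main_result2}, which requires the unperturbed drift to be twice continuously differentiable. Neither is available under \eqref{eq:psi_cond}. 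The paper's decomposition avoids the issue entirely because the only linearization performed is of $b_1$ itself, in the original time variable. So either add the smoothness of $\psi_1$ as a hypothesis, or keep the drift terms in the original time (time-changing only the stochastic integral) as the paper does. A secondary small point: $\tilde W_2$ is not $W_2\circ B^\e$ but the $d$-dimensional martingale $\int_0^{B^\e(\cdot)}\Psi\,dW_2$, which is where the factor $1/\Psi$ in $\tilde\sigma$ comes from; your statement is consistent with this but worth making explicit.
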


Theorem \ref{main_result2} will be proved in Sections \ref{sec:aux_lemma}-\ref{sec:proof_of_main_result} and the proof of Corollary \ref{cor:non_unit_varphi} will be presented in Section \ref{sec:non_unit_varphi}.

 \begin{Remark}
As it is apparent from Theorem \ref{main_result2}, the drift part of the perturbation does not contribute to the deviations of order $\sqrt{\varepsilon}$. It is natural however to conjecture that it will play a role in fluctuations of order $\varepsilon$ under some additional assumptions on the regularity of $b_1$. For example, if $\sigma\equiv 0$, almost identical arguments as in the proof of Theorem \ref{main_result2} show that $\tilde{\zeta}^{\e}(t)=\e^{-1}(Y^{\e}(t)-y(t))$ converges weakly to $\tilde{\zeta}^0(t)$ where
\[
d\tilde{\zeta}^0(t)=D_xb_1(y(t))\tilde{\zeta}^0(t)dt+\int_{-\infty}^{\infty}b_2(x,y(t))dxL^{\bar{W}_1}(dt,0).
\]
\noindent We expect that $b_2$ will also play a role on the behavior of $Y^{\varepsilon}(t)$ on timescales of order $1/\varepsilon^2$ as it takes this much time for a typical realization of the Brownian local time to make a growth of order 1. We are planning to return to these questions in a future paper.
\end{Remark}

\subsubsection*{An example: perturbed harmonic oscillator}

Let $\sigma\in L^2(\mathbb{R})$ be Lipschitz continuous and consider the equation
\[
\ddot{q}^{\e}(t)+q^{\e}(t)=\sigma(\e^{-1}W_1(t))\dot{W}_2(t),\qquad q^{\e}(0)=0,~ \dot{q}^{\e}(0)=1.
\]
Introducing $Y^{\e}=(\dot{q}^{\e},q^{\e})$ leads to the system
\begin{equation}\label{eq:perturb_harm_osc}
dY^{\e}(t)=\left(\begin{array}{cc}
0&-1\\
1&0
\end{array}\right)Y^{\e}(t)+\left(\begin{array}{c}
\sigma(\e^{-1}W_1(t))\\
0
\end{array}\right)dW_2(t),\qquad Y^{\e}(0)=(1,0).
\end{equation}
Thus $y(t)=(\cos(t),\sin(t))$.

Theorem \ref{main_result2} and Remark \ref{rem:variation-of-parameters} imply that $\zeta^{\e}=\e^{-1/2}(Y^{\e}-y)$ converges weakly to
\[
\zeta^0(t)=||\sigma||_{L^2(\mathbb{R})}\int_0^t\left(\begin{array}{c}
\cos(t-s)\\
\sin(t-s)
\end{array}\right)dV(s),
\]
where $V(t)=\bar{W}_2(L^{\bar{W}_1}(t,0))$ which in turn means
\begin{equation}\label{eq:approx}
q^{\e}(t)\approx \cos(t) +\sqrt{\e}||\sigma||_{L^2(\mathbb{R})}\int_0^t\cos(t-s)dV(s),
\end{equation}
for small enough $\e$. A typical trajectory can be seen on Figure 1.
\begin{figure}
\centering
\includegraphics[scale=0.8]{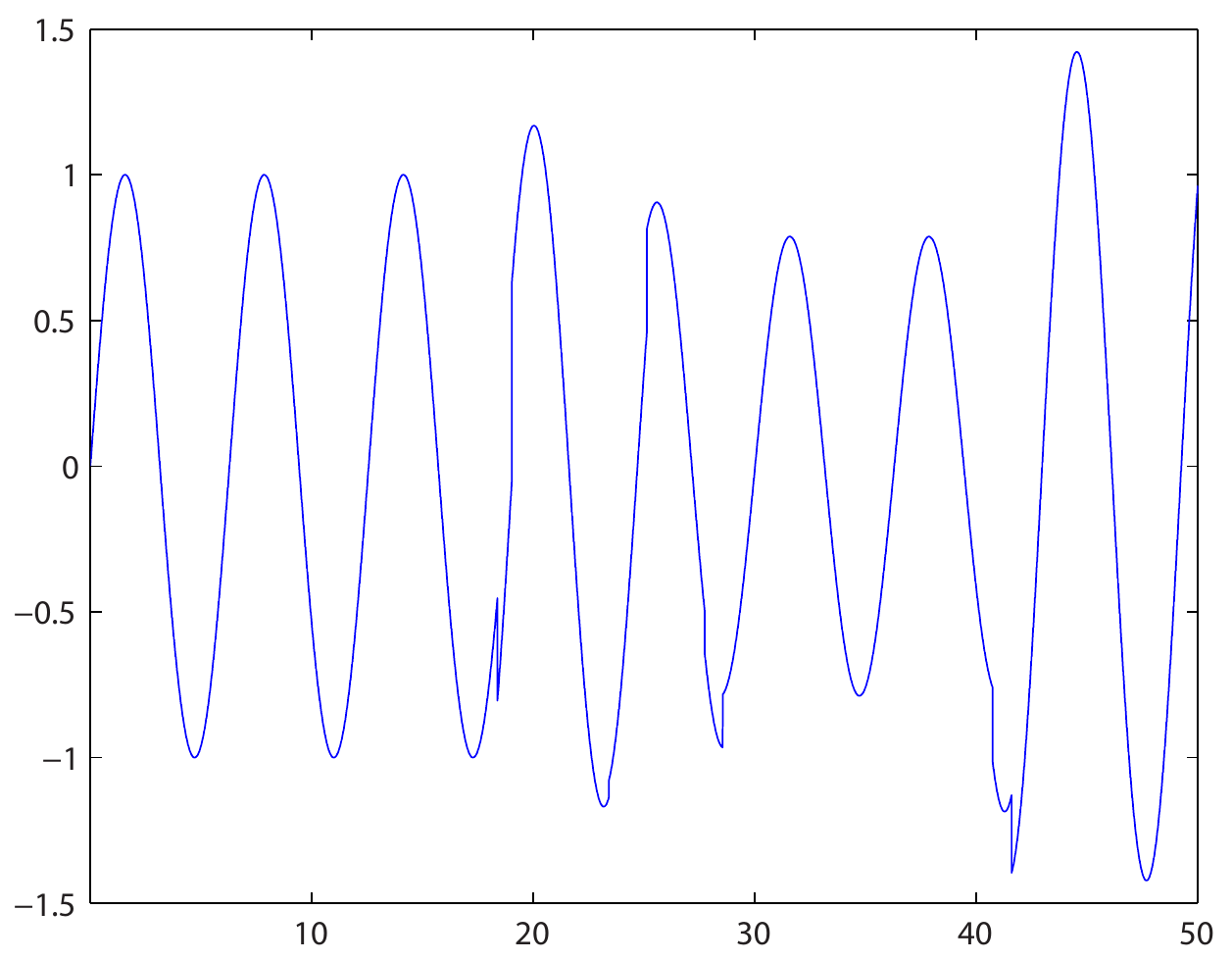}
\caption{A typical trajectory of the approximation \eqref{eq:approx} with $\sqrt{\e}=0.1$, $||\sigma||_{L^2(\mathbb{R}^2)}=100$.}
\end{figure}

\section{Auxilliary lemmas}\label{sec:aux_lemma}
In Sections \ref{sec:aux_lemma}-\ref{sec:proof_of_main_result} we prove Theorem \ref{main_result2} and therefore we consider equation \eqref{eq:the_solution}.
First we prove the  following lemma which  establishes a continuity property of certain functionals of the fast Brownian motion.

\begin{Lemma}\label{lem:mylemma}
  Suppose that $\psi \in L^1(\reals)$. Then for any $p \geq 1$, there exists $C_p>0$ such that
  \begin{equation} \label{continuity-for-integral-eq}
    \E \left| \frac{1}{\e} \int_s^t \psi(\e^{-1}W_1(r)) dr \right|^p \leq C_p |\psi|_{L^1(\reals)}^p|t-s|^{\frac{p}{2}},
  \end{equation}
\end{Lemma}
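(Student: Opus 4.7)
The plan is to apply the occupation times formula for Brownian local time, reducing the estimate to a uniform-in-$x$ moment bound on local time increments. First, by the occupation times formula,
\[
\int_s^t \psi(\e^{-1}W_1(r))dr = \int_{\reals}\psi(\e^{-1}x)\bigl(L^{W_1}(t,x)-L^{W_1}(s,x)\bigr)dx,
\]
and the change of variables $u = x/\e$ gives
\[
\frac{1}{\e}\int_s^t \psi(\e^{-1}W_1(r))dr = \int_{\reals}\psi(u)\bigl(L^{W_1}(t,\e u)-L^{W_1}(s,\e u)\bigr)du.
\]
This identity is the heart of the matter: it reveals why the $1/\e$ normalization is the natural one, since the occupation density of $\e^{-1}W_1$ is spread over a spatial region of width $\mathcal{O}(\e^{-1})$, and after rescaling one is left with an integral against the fixed integrable density $\psi(u)du$.

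Next I would apply Jensen's inequality with respect to the probability measure $|\psi(u)|du/|\psi|_{L^1(\reals)}$ on the right-hand side to get
\[
\left|\int_{\reals}\psi(u)\bigl(L^{W_1}(t,\e u)-L^{W_1}(s,\e u)\bigr)du\right|^p \leq |\psi|_{L^1(\reals)}^{p-1}\int_{\reals}|\psi(u)|\,\bigl|L^{W_1}(t,\e u)-L^{W_1}(s,\e u)\bigr|^p du.
\]
Taking expectation and pulling it inside via Tonelli, the lemma reduces to the estimate $\sup_{x\in\reals}\E\bigl|L^{W_1}(t,x)-L^{W_1}(s,x)\bigr|^p \leq C_p|t-s|^{p/2}$.

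That uniform moment bound is the only real content and would be handled via Tanaka's formula
\[
L^{W_1}(t,x)-L^{W_1}(s,x) = |W_1(t)-x|-|W_1(s)-x| - \int_s^t \mathrm{sgn}(W_1(r)-x)dW_1(r).
\]
The boundary term is bounded pointwise by $|W_1(t)-W_1(s)|$, whose $p$-th moment is a constant multiple of $|t-s|^{p/2}$, while the Burkholder-Davis-Gundy inequality applied to the stochastic integral (whose integrand is bounded by $1$, giving quadratic variation at most $t-s$) yields the same order $|t-s|^{p/2}$. Both bounds are uniform in $x$, which is the only point requiring any care; once that is in hand the proof is a matter of combining these estimates with the reduction above.
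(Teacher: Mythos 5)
Your proof is correct, but it takes a genuinely different route from the paper's. The paper never mentions local time in this lemma: it sets $\Psi(x)=\int_{-\infty}^x\psi(y)dy$ and $f(x)=\int_0^x\Psi(y)dy$, so that $f''=\psi$ a.e.\ and $f$ is Lipschitz with constant $|\psi|_{L^1(\reals)}$, applies the Meyer--Tanaka formula to $f(\e^{-1}W_1(\cdot))$, and solves for the term $\frac{1}{2\e^2}\int_s^t\psi(\e^{-1}W_1(r))dr$; the two remaining terms are a Lipschitz increment of $f$, bounded by $|\psi|_{L^1(\reals)}|W_1(t)-W_1(s)|$, and a stochastic integral with integrand bounded by $|\psi|_{L^1(\reals)}$, both handled exactly as in your final step. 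You instead disintegrate the estimate over the spatial variable: occupation times formula, rescaling, and Jensen against the measure $|\psi(u)|du/|\psi|_{L^1(\reals)}$ reduce everything to the uniform bound $\sup_{x}\E|L^{W_1}(t,x)-L^{W_1}(s,x)|^p\leq C_p|t-s|^{p/2}$, which you prove by Tanaka's formula at level $x$ plus Burkholder--Davis--Gundy. The two arguments rest on the same underlying tools (a Tanaka-type identity and BDG with a bounded integrand); in effect the paper's computation is what you obtain by integrating your pointwise-in-$x$ Tanaka decomposition against $\psi(u)du$ \emph{before} taking moments rather than after, which lets it dispense with your Jensen step. What your version buys is that it makes the role of the local time at scale $\e$ explicit (which is exactly how the lemma is exploited later in the proof of Lemma \ref{lem:easiest}) and that the occupation formula cleanly justifies the a.s.\ finiteness of $\int_s^t\psi(\e^{-1}W_1(r))dr$ for a merely integrable, possibly unbounded $\psi$; what the paper's version buys is that it stays at the level of a single antiderivative and needs no properties of the local time field beyond the Meyer--Tanaka formula itself. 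Two small points of bookkeeping: your Jensen step should be prefaced by the (trivial) case $|\psi|_{L^1(\reals)}=0$, and the constants do assemble correctly, since $|\psi|_{L^1(\reals)}^{p-1}\cdot C_p|t-s|^{p/2}\cdot|\psi|_{L^1(\reals)}=C_p|\psi|_{L^1(\reals)}^{p}|t-s|^{p/2}$ as required.
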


\begin{proof}
 The function
  \[\Psi(x) = \int_{-\infty}^x \psi(y) dy\]
  is well defined, continuous and bounded ($ |\Psi(x)| \leq |\psi|_{L^1(\mathbb{R})})$. Consequently, we can define
  \[f(x) = \int_0^x \Psi(y) dy\]
  so that $f''(x) = \psi(x)$.
  This function is Lipschitz continuous because
  \begin{equation} \label{f-Lipschitz-eq}
   |f(x_2)-f(x_1)|\leq \left|\int_{x_1}^{x_2} \Psi(y) dy \right| \leq |\psi|_{L^1(\mathbb{R})} |x_2-x_1|.
  \end{equation}
Note that since $f''$ is not necessarily continuous, we cannot directly apply the It$\hat{o}$-formula to $f$. By the Meyer-Tanaka formula, however, we do have
  \begin{equation} \label{Ito-eq}
    f(\e^{-1}W_1(t)) - f(\e^{-1}W_1(s)) = \frac{1}{\e} \int_s^t \Psi(\e^{-1}W_1(r)) dW_1(s) + \frac{1}{2\e^2} \int_s^t \psi(\e^{-1}W_1(r)) dr.
  \end{equation}
%by recognizing that the second derivative measure is $\psi(x)dx$.
This implies that
  \[  \left|\frac{1}{\e} \int_s^t \psi(\e^{-1}W_1(r)) dr \right| \leq 2\e\left|f(\e^{-1}W_1(t)) - f(\e^{-1}W_1(s))\right|  + 2 \left|\int_s^t \Psi(\e^{-1}W_1(r)) dW_1(r) \right|,\]
  %and by the elementary inequality $|a+b|^p \leq 2^{p-1}(|a|^p + |b|^p)$,
  so that
  \[\E \left|\frac{1}{\e} \int_s^t \psi(\e^{-1}W_1(r)) dr \right|^p \leq 2^{2p-1} \left( \E \e^p |f(\e^{-1}W_1(t))-f(\e^{-1}W_1(s))|^p + \E \left|\int_s^t \Psi(\e^{-1}W_1(r)) dW_1(r) \right|^p \right).\]
  By \eqref{f-Lipschitz-eq},
  \[\e| f(\e^{-1}W_1(t)) -f(\e^{-1}W_1(s))| \leq  |\psi|_{L^1(\reals)} |W_1(t) - W_1(s)|,\]
  and we see that
  \[\E \left| \frac{1}{\e} \int_s^t \psi(\e^{-1}W_1(r)) dr \right|^p \leq C_p  \left(|\psi|_{L^1(\reals)}^p \left|t-s\right|^{\frac{p}{2}} + \left| \E \int_s^t \Psi^2(\e^{-1}W_1(r))  dr \right|^{\frac{p}{2}} \right), \]
where we used the BDG inequality.
  The integrand is bounded as $\Psi(x) \leq |\psi|_{L^1(\reals)}$ and therefore we can conclude that
  \[\E \left| \frac{1}{\e} \int_s^t \psi(\e^{-1}W_1(r)) dr \right|^p \leq C_p|\psi|_{L^1(\reals)}^p|t-s|^{\frac{p}{2}}.\]
\end{proof}

The second technical result we are going to need is an estimate on the $L^p$ convergence rate of $Y^{\varepsilon}(t)$ to $y(t)$ which is a consequence of Lemma \ref{lem:mylemma}.

\begin{Lemma}\label{lem:Y_close_to_y}
For every $p\geq 1$, there exists a constant $C_{T,p}$ such that
\[
\sup_{t\in [0,T]}\mathrm{E}|Y^{\varepsilon}(t)-y(t)|^p<C_{T,p}\varepsilon^{p/2}.
\]
\end{Lemma}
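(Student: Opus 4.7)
The plan is to use the integral form of \eqref{eq:Y_eq} and the ODE for $y(t)$, split $Y^\e(t)-y(t)$ into three natural pieces, bound each piece in $L^p$ using the integrability hypotheses on $\hat b$ and $\hat\sigma^2$ together with Lemma \ref{lem:mylemma}, and close with Gr\"onwall.

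Explicitly, I would write
\[
Y^\e(t)-y(t) = \int_0^t\bigl[b_1(Y^\e(s))-b_1(y(s))\bigr]\,ds + \int_0^t b_2(\e^{-1}W_1(s),Y^\e(s))\,ds + \int_0^t \sigma(\e^{-1}W_1(s),Y^\e(s))\,dW_2(s),
\]
and estimate the three terms separately. The first term is handled by the Lipschitz continuity of $b_1$ and H\"older: its $p$-th moment is dominated by $C_{T,p}\int_0^t \mathrm{E}|Y^\e(s)-y(s)|^p\,ds$. For the second term, the pointwise bound $|b_2(x,y)|\leq\hat b(x)$ and Lemma \ref{lem:mylemma} applied to $\psi=\hat b\in L^1(\reals)$ give a bound of order $\e^p t^{p/2}$, which is already smaller than the target. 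For the third (stochastic integral) term, BDG yields
\[
\mathrm{E}\Bigl|\int_0^t \sigma(\e^{-1}W_1,Y^\e)\,dW_2\Bigr|^p \leq C_p\,\mathrm{E}\Bigl(\int_0^t \Tr\sigma\sigma^T(\e^{-1}W_1(s),Y^\e(s))\,ds\Bigr)^{p/2} \leq C_p\,\mathrm{E}\Bigl(\int_0^t \hat\sigma^2(\e^{-1}W_1(s))\,ds\Bigr)^{p/2},
\]
and Lemma \ref{lem:mylemma} with $\psi=\hat\sigma^2\in L^1(\reals)$ (applied at exponent $p/2$ when $p\geq 2$, and combined with Jensen when $1\leq p<2$) produces a bound of order $\e^{p/2}$. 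Thus
\[
\mathrm{E}|Y^\e(t)-y(t)|^p \leq C_{T,p}\Bigl(\e^{p/2} + \int_0^t \mathrm{E}|Y^\e(s)-y(s)|^p\,ds\Bigr),
\]
and Gr\"onwall's inequality yields $\sup_{t\in[0,T]}\mathrm{E}|Y^\e(t)-y(t)|^p \leq C_{T,p}\e^{p/2}$, as required.

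No step presents a genuine obstacle: the integrability hypotheses on $\hat b$ and $\hat\sigma^2$ are tailor-made for Lemma \ref{lem:mylemma}, which converts the $\e^{-1}$-time-rescaling of an $L^1$ functional of $W_1$ into an $\e$-gain of the correct order. The only minor care needed is the case $1\leq p<2$ in the diffusion estimate, where the exponent $p/2<1$ forces a brief Jensen step to reduce to $p=2$; the Lipschitz hypotheses on $b_1$, $b_2$, $\sigma$ are used only via the boundedness of the derivative of $b_1$ in the Gr\"onwall step.
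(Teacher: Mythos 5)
Your proposal is correct and follows essentially the same route as the paper: the same three-term decomposition, Lipschitz plus Jensen for the $b_1$ term, Lemma \ref{lem:mylemma} applied to $\hat b$ and $\hat\sigma^2$ for the perturbation and martingale terms, and Gr\"onwall to close. Your remark about handling $1\leq p<2$ via Jensen is a small point of care that the paper's proof passes over silently.
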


\begin{proof}
Note that
\begin{align*}
Y^{\varepsilon}(t)-y(t)&=\int_0^t(b_1(Y^{\varepsilon}(s))-b_1(y(s))ds+\int_0^tb_2\left({\varepsilon}^{-1}W_1(s),Y^{\varepsilon}(s)\right)ds+\\
&\int_0^t\sigma\left({\varepsilon}^{-1}W_1(s),Y^{\varepsilon}(s)\right)dW_2(s)=I_1^{\varepsilon}(t)+I_2^{\varepsilon}(t)+I_3^{\varepsilon}(t).
\end{align*}
By the Lipschitz continuity of $b_1$ and Jensen's inequality,
\[
\mathrm{E}|I_1^{\varepsilon}(t)|^p\leq T^{p-1}\textnormal{Lip}(b_1)^p\int_0^t\mathrm{E}|Y^{\varepsilon}(s)-y(s)|^p.
\]
On the other hand,
\[
\sup_{t\in[0,T]}\mathrm{E}|I^{\varepsilon}_2(t)|^p\leq\mathrm{E}\left(\int_0^T\hat{b}(\varepsilon^{-1}W_1(s))ds\right)^p\leq C_p \varepsilon^p T^{p/2},
\]
where in the last inequality we used Lemma \ref{lem:mylemma} with $s=0$. Finally, it is easy to see that the scalar quadratic variation of $I_3$ is
\[
\Tr<I_3>_t=\int_0^t\Tr\sigma\sigma^T(\varepsilon^{-1}W_1(s),Y^{\e}(s))ds\leq\int_0^t\hat{\sigma}^2(\varepsilon^{-1}W_1(s))ds,
\]
and therefore by the Burkholder-Davis-Gundy inequality and Lemma \ref{lem:mylemma} we have
\begin{align}\label{eq:I3}
\sup_{t\in[0,T]}\mathrm{E}|I_3^{\varepsilon}(t)|^p&\leq  C_p\mathrm{E}\left(\int_0^T\hat{\sigma}^2(\varepsilon^{-1}W_1(s))ds\right)^{p/2}<C_{p}\varepsilon^{p/2}T^{p/4}.
\end{align}
The result now follows from Gronwall's lemma.
\end{proof}

Next, we show that the stochastic integral in \eqref{eq:the_solution} is well defined.

\begin{Lemma}
  Suppose that $W(t)$ is a $d$-dimensional Wiener process and that $F(t)$ is an increasing, deterministic, real-valued function for $t \in [0,+\infty]$. Then the composition $V(t) = W(F(t))$ is a Gaussian martingale, and if $\psi(s)$ is a deterministic matrix-valued process with
  \[\int_0^t \Tr(\psi\psi^T)(s) dF(s) < +\infty, \]
  where this is the Riemann-Stieltjes integral with respect to $F$, then the stochastic integral
  \[\int_0^t \psi(s) dV(s)\]
  is a well-defined variable with distribution
  \[N\left(0, \int_0^t (\psi\psi^T)(s) dF(s)\right),\]
  where the above integral is a Riemann-Stieltjes integral.
\end{Lemma}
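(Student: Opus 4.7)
The plan is to build the integral first on deterministic simple step functions, where everything reduces to elementary Gaussian computations, and then to extend by the resulting $L^2$ isometry. The key fact driving the whole argument is that, because $F$ is deterministic, the increments $V(t_{k+1})-V(t_k)$ over disjoint intervals are independent centered Gaussians with covariance $(F(t_{k+1})-F(t_k))I_d$.

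First I would check that $V(t)=W(F(t))$ is a Gaussian martingale. Because $F$ is deterministic and increasing, $\mathcal{G}_t:=\mathcal{F}^W_{F(t)}$ is a filtration, $V$ is $\mathcal{G}$-adapted, and
\[
\E[V(t)-V(s)\mid\mathcal{G}_s]=\E[W(F(t))-W(F(s))\mid\mathcal{F}^W_{F(s)}]=0
\]
for $s<t$. Moreover $V(t)-V(s)$ is independent of $\mathcal{G}_s$ and distributed as $N(0,(F(t)-F(s))I_d)$, so any finite collection $V(t_1),\dots,V(t_n)$ is jointly Gaussian with independent increments across disjoint intervals.

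For a deterministic simple step function $\psi(s)=\sum_{k=0}^{n-1}\psi_k\mathbf{1}_{[s_k,s_{k+1})}(s)$ with $0=s_0<\cdots<s_n=t$, I would set
\[
\int_0^t\psi(s)\,dV(s):=\sum_{k=0}^{n-1}\psi_k\bigl(V(s_{k+1})-V(s_k)\bigr).
\]
Since the $\psi_k$ are constant matrices and the increments are independent Gaussians, the right-hand side is centered Gaussian with covariance $\sum_{k=0}^{n-1}\psi_k\psi_k^T(F(s_{k+1})-F(s_k))$, which is a Riemann--Stieltjes partial sum for $\int_0^t(\psi\psi^T)(s)\,dF(s)$. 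Taking the trace yields the isometry
\[
\E\left|\int_0^t\psi\,dV\right|^2=\int_0^t\Tr(\psi\psi^T)(s)\,dF(s).
\]
To extend to a general deterministic $\psi$ satisfying the integrability hypothesis, I would approximate $\psi$ in $L^2([0,t],dF)$ by simple step functions $\psi_n$, which is standard because $dF$ is a finite Borel measure on $[0,t]$. The isometry makes $\int_0^t\psi_n\,dV$ Cauchy in $L^2(\P)$, so it has a limit, independent of the approximating sequence, which I take as the definition of $\int_0^t\psi\,dV$. Since each partial integral is centered Gaussian and $L^2$-limits of Gaussians are Gaussian with limiting covariance, the limit is $N\bigl(0,\int_0^t(\psi\psi^T)(s)\,dF(s)\bigr)$.

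The only genuine subtlety is that $F$ need not be absolutely continuous, so $V$ is not a continuous semimartingale with respect to Lebesgue time and one cannot simply import the standard It\^o theory built against $ds$. This is circumvented entirely by the simple-function construction, whose isometry lives naturally against $dF$. An equivalent shortcut worth recording is to use the right-continuous generalized inverse $F^{-1}$ and define
\[
\int_0^t\psi(s)\,dV(s):=\int_0^{F(t)}\psi(F^{-1}(u))\,dW(u)
\]
as an ordinary Wiener integral of a deterministic integrand, recovering the stated covariance by change of variables in the Riemann--Stieltjes integral.
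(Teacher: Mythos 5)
Your proposal is correct and follows essentially the same route as the paper's own proof: define the integral on deterministic step functions, where the increments $V(t_{k+1})-V(t_k)$ are independent centered Gaussians with variance proportional to $F(t_{k+1})-F(t_k)$, compute the covariance as a Riemann--Stieltjes sum, and extend by density of step functions via the resulting $L^2$ isometry. The paper states this more tersely, while you additionally verify the martingale property of $V$ and record the equivalent construction via the generalized inverse of $F$, but the core argument is identical.
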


\begin{proof}
  If $\psi = \sum_{k=0}^{N-1} \psi_k \chi_{[t_k,t_{k+1}]}(s)$ is a step function, then
  \[\int_0^t \psi(s) d(W(F(s))) = \sum_{k=0}^{N-1} \psi_k (W(F(t_{k+1})) - W(F(t_k))), \]
  which is a zero-mean Gaussian random variable with covariance
  \[\sum_{k=0}^{N-1} \psi_k \psi_k^T (F(t_{k+1}) - F(t_k)).\]
  The result follows by the density of these step functions.
\end{proof}

\begin{Corollary} \label{cor:stoch-int-exist}
  Suppose that $W$ is a $d$-dimensional Wiener process and that $F:[0,\infty)\to\mathbb{R}$ is an increasing function that is independent of $W$. Set $V(t) = W(F(t))$. Then if $\psi(s)$ is a deterministic matrix-valued process with
  \[\E \int_0^t \Tr(\psi\psi^T)(s) dF(s) < +\infty, \]
  where this is the Riemann-Stieltjes integral with respect to $F$, then the stochastic integral
  \[\int_0^t \psi(s) dV(s)\]
  is a well-defined random variable with characteristic function for any $\lambda \in \reals^d$
  \[\E \left(\exp \left(i \left<\lambda, \int_0^t \psi(s) d(W(F(s))) \right> \right) \right)
  = \E \left(-\frac{1}{2} \int_0^t \left<(\psi\psi^T)(s)\lambda, \lambda \right> dF(s) \right). \]
  \end{Corollary}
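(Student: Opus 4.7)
The plan is to reduce Corollary \ref{cor:stoch-int-exist} to the previous lemma by conditioning on $F$. Because $W$ is independent of $F$, the conditional law of $W$ given the $\sigma$-algebra generated by $F$ is still that of a $d$-dimensional Brownian motion. Thus, for almost every realisation of $F$, the previous lemma applies with that (now deterministic) increasing path, producing a well-defined conditional Wiener integral; the task is to stitch these conditional objects together into a single random variable on the original probability space and then compute its characteristic function.

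I would start by defining $\int_0^t \psi(s)\,dV(s)$ on step functions $\psi = \sum_{k=0}^{N-1}\psi_k \chi_{[t_k,t_{k+1}]}(s)$ in the telescoping way $\sum_k \psi_k(W(F(t_{k+1}))-W(F(t_k)))$, which is manifestly a random variable on $\Omega$. By the previous lemma applied conditionally on $F$, this sum is centred Gaussian with covariance $\sum_k \psi_k\psi_k^T (F(t_{k+1})-F(t_k))$, i.e.\ the Riemann--Stieltjes sum for $\int_0^t (\psi\psi^T)(s)\,dF(s)$.

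For general deterministic $\psi$ satisfying the integrability hypothesis, I would run a Cauchy argument in $L^2(\Omega)$: choosing step-function approximants $\psi_n$ with $\E\int_0^t \Tr((\psi_n-\psi)(\psi_n-\psi)^T)(s)\,dF(s)\to 0$, the isometry
\[ \E\left|\int_0^t \psi_n(s)\,dV(s)-\int_0^t \psi_m(s)\,dV(s)\right|^2 = \E\int_0^t \Tr((\psi_n-\psi_m)(\psi_n-\psi_m)^T)(s)\,dF(s) \]
follows by first computing the left-hand inner expectation conditional on $F$ via the previous lemma and then removing the conditioning by the tower property. This defines $\int_0^t \psi(s)\,dV(s)$ as an $L^2$-limit, independent of the chosen approximants, and automatically measurable on the product space.

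The characteristic function identity (with the evident correction that the right-hand side of the displayed formula in the statement should read $\E\exp(-\tfrac{1}{2}\cdots)$) then follows by a second application of the tower property: conditionally on $F$, the previous lemma gives
\[ \E\!\left(\exp\!\left(i\Big\langle \lambda,\int_0^t \psi(s)\,dV(s)\Big\rangle\right)\,\Big|\,F\right) = \exp\!\left(-\tfrac{1}{2}\int_0^t \langle (\psi\psi^T)(s)\lambda,\lambda\rangle\,dF(s)\right), \]
and taking outer expectations yields the claim. The main subtlety is the measurability question in the first two paragraphs: making sure that the conditional construction yields a genuinely jointly measurable variable on $\Omega$, rather than one defined only up to $F$-dependent null sets. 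Once that bookkeeping is settled, the rest is standard Hilbert-space completion together with one use of Fubini.
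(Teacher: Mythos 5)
Your proposal is correct and follows exactly the paper's route: the paper's entire proof is ``this follows from the previous lemma by conditioning on $F$,'' which is precisely the tower-property argument you spell out (and your observation that the right-hand side of the displayed characteristic function is missing an $\exp$ is a genuine typo in the statement). The extra detail you supply on the $L^2$-isometry and measurability is a faithful elaboration rather than a different method.
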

  \begin{proof}
    This follows from the previous lemma by conditioning on $F$.
  \end{proof}

\section{A simpler case}\label{sec:reducing_to_simple}

We first show that the Theorem \ref{main_result2} holds in the case where $Y^\e(t)$ is replaced by $y(t)$ in the second argument of $\sigma$, and then we prove that the general case can be reduced to this one.
\begin{Lemma}\label{lem:easiest}
Let
\[
dJ^\e(t) = \sigma(\varepsilon^{-1}W_1(t),y(t))dW_2(t),\qquad  J^{\varepsilon}(0)=0.
\]
Then $\varepsilon^{-1/2}J^{\varepsilon}(t)$ converges in distribution to
\begin{equation} \label{eq:J-bar}
\bar{J}(t) := \int_0^t \sqrt{\left(\int_{-\infty}^\infty  (\sigma\sigma^T)(x,y(s))dx\right)} dV(s)
\end{equation}
where $V(t) = W(L^{W_1}(t,0))$ for some $d$-dimensional Wiener process, $W$, independent of $W_1$.
\end{Lemma}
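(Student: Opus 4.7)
The plan is to exploit the conditional Gaussianity of $M^\e(t) := \e^{-1/2} J^\e(t)$ given $W_1$, identify the limit of its conditional covariance matrix as a Riemann-Stieltjes integral against the local time $L^{W_1}(\cdot, 0)$, invoke Corollary \ref{cor:stoch-int-exist} to match this with the conditional covariance of $\bar J$, and then supplement the resulting convergence of finite-dimensional distributions with a Kolmogorov-type tightness estimate.

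Since $W_2$ is independent of $W_1$, conditionally on $\sigma(W_1)$ the process $M^\e$ is a Wiener integral with deterministic matrix integrand, hence a centered Gaussian martingale whose conditional covariance matrix is
\[
Q^\e(t) = \e^{-1}\int_0^t \sigma\sigma^T(\e^{-1}W_1(s), y(s))\, ds.
\]
By Corollary \ref{cor:stoch-int-exist} applied conditionally on $W_1$, the process $\bar J$ is itself centered Gaussian with conditional covariance
\[
\bar Q(t) = \int_0^t A(s)\, dL^{W_1}(s, 0), \qquad A(s) := \int_{-\infty}^{\infty} \sigma\sigma^T(x, y(s))\, dx.
\]
The crux of the argument is to show $Q^\e(t) \to \bar Q(t)$ almost surely for each fixed $t > 0$ (and analogously for the quadratic forms in $Q^\e(t_k) - Q^\e(t_{k-1})$ arising from joint characteristic functions). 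Changing variables $u = \e^{-1}x$ in the occupation-time formula gives
\[
Q^\e(t) = \int_{-\infty}^{\infty} \int_0^t \sigma\sigma^T(u, y(s))\, d_s L^{W_1}(s, \e u)\, du.
\]
The joint continuity of Brownian local time (Trotter's theorem) implies that almost surely $(s,x) \mapsto L^{W_1}(s,x)$ is uniformly continuous on compacts, so the measures $d_s L^{W_1}(s, \e u)$ converge vaguely on $[0,t]$ to $d_s L^{W_1}(s, 0)$ uniformly for $u$ in any bounded set. Combined with the dominating bound $|\sigma\sigma^T(u, y(s))| \leq \hat\sigma^2(u) \in L^1(\mathbb{R})$ and dominated convergence in $u$, this yields $Q^\e(t) \to \bar Q(t)$ almost surely.

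Convergence of finite-dimensional distributions then follows by conditioning on $W_1$: the joint characteristic function of $(M^\e(t_1), \ldots, M^\e(t_k))$ is the exponential of $-\tfrac{1}{2}$ times a quadratic form in the covariance increments $Q^\e(t_k) - Q^\e(t_{k-1})$, and bounded convergence in the outer expectation passes the limit inside, producing exactly the joint characteristic function of $(\bar J(t_1), \ldots, \bar J(t_k))$ supplied by Corollary \ref{cor:stoch-int-exist}. To upgrade this to process convergence in $C([0,T], \mathbb{R}^d)$, we verify Kolmogorov's tightness criterion. The Burkholder-Davis-Gundy inequality, the pointwise bound $\Tr(\sigma\sigma^T)(x, y) \leq \hat\sigma^2(x)$, and Lemma \ref{lem:mylemma} applied with $\psi = \hat\sigma^2$ and exponent $p/2$ yield, for any $p \geq 2$,
\[
\E|M^\e(t) - M^\e(s)|^p \leq C_p\, \E\!\left(\e^{-1}\int_s^t \hat\sigma^2(\e^{-1}W_1(r))\, dr\right)^{p/2} \leq C_p |\hat\sigma^2|_{L^1(\mathbb{R})}^{p/2}|t-s|^{p/4},
\]
and choosing $p > 4$ supplies the Hölder moment bound necessary for tightness.

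The principal obstacle is the limit identification for $Q^\e$: while $L^{W_1}(\cdot, \e u) \to L^{W_1}(\cdot, 0)$ uniformly for $u$ on compacts, commuting this convergence with the $u$-integration needs the $L^1$ integrability of $\hat\sigma^2$ to control the tails $|u| \to \infty$, since after the rescaling the occupation measure on a fixed $s$-interval remains nontrivial over unbounded space.
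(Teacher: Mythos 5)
Your proposal is correct and follows essentially the same route as the paper: conditioning on $W_1$ to reduce the problem to almost-sure convergence of the conditional covariance, rewriting that covariance via the occupation-time formula and the change of variables $u=\e^{-1}x$ as an integral against $L^{W_1}(ds,\e u)$, passing to the limit using joint continuity of local time together with the $L^1$ bound $\hat\sigma^2$, and concluding via characteristic functions, finite-dimensional distributions, and the Kolmogorov tightness criterion with the $|t-s|^{p/4}$ moment bound. The only (harmless) difference is that you obtain the almost-sure convergence of $Q^\e(t)$ in one step by dominating the integrand with $\hat\sigma^2(u)\sup_x L^{W_1}(t,x)$, whereas the paper controls the spatial tails in $L^1(\Omega)$ via Lemma \ref{lem:mylemma} and then extracts almost-surely convergent subsequences; the paper also justifies the occupation-time identity for the $s$-dependent integrand by an explicit step-function approximation of $y$, a step you implicitly assume.
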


\begin{proof}
Note that because $W_1$ and $W_2$ are independent, the conditional law of $\e^{-1/2}J^\e$, conditioned on $W_1$, is zero mean Gaussian and therefore it is determined by its quadratic variation. In fact, for any fixed $t>0$,
\[
\left(\varepsilon^{-1/2}J^{\varepsilon}(t) |W_1 \right)\stackrel{D}{=}N\left(0, \frac{1}{\varepsilon}\int_0^{t}(\sigma\sigma^T)(\varepsilon^{-1}W_1(s),y(s))ds\right)
\]
where the above notation means that for any $\lambda \in \mathbb{R}^d$
\begin{equation} \label{eq:ptwise-char-funct}
\E \left(\exp\left(i \left<\lambda,(\e^{-1/2} J^\e(t))\right>\right) | W_1 \right) = \exp\left(-\frac{1}{2 \e} \int_0^{t} \left<(\sigma\sigma^T)(\e^{-1} W_1(s), y(s))\lambda, \lambda \right> ds \right).
\end{equation}
We can see from the above formula that the convergence of the quadratic variation of $\e^{-1/2}J^\e(t)$ implies the converge in distribution of $\e^{-1/2}J^\e(t)$.
% Note that our assumptions guarantee that $y\in C^1([0,T])$ {\bf Is $C^1$ really needed? Isn't just continuity enough?} and so the approximating sum
By the continuity of $y$, the approximating sum
\[
y_n =\sum_{i=0}^ny\left(\frac{iT}{n}\right)\mathbbm{1}_{[iT/n,(i+1)T/n)}
\]
converges to $y$ in $C([0,T]; \reals^d)$. Because of the continuity of $\sigma$, for any fixed $\e$,
 \[\frac{1}{\e} \int_0^t (\sigma\sigma^T)(\e^{-1} W_1(s), y_n(s)) \to \frac{1}{\e} \int_0^t (\sigma\sigma^T)(\e^{-1}W_1(s), y(s)) ds\]
almost surely.

 By the definition of the Brownian local time, it is not hard to check that on a set of full measure we have
\begin{align*}
\frac{1}{\varepsilon}\int_0^t(\sigma\sigma^T)&(\varepsilon^{-1}W_1(s),y_n(s))ds=\\
&=\frac{1}{\varepsilon}\sum_{i=0}^n\int_{\mathbb{R}}(\sigma\sigma^T)\left(\frac{x} {\varepsilon},y\left(\frac{it}{n}\right)\right)\left(L^{W_1} \left({\frac{(i+1)t}{n}},x \right)-L^{W_1}\left(\frac{it}{n},x\right)\right)dx.
\end{align*}
By a change of variables, this becomes
\[
\sum_{i=0}^n\int_{\mathbb{R}}(\sigma\sigma^T)\left(x,y\left(\frac{it}{n}\right)\right)\left(L^{W_1}\left(\frac{(i+1)t}{n}, \varepsilon x\right)-L^{W_1}\left(\frac{it}{n},\varepsilon x\right)\right)dx.
\]
Taking $n\to\infty$, we see that for fixed $\e$, we have with probability one that
\begin{equation}\label{eq:finite_epsilon}
\frac{1}{\e} \int_0^t (\sigma\sigma^T)(\e^{-1} W_1(s),y(s)) ds = \int_\reals \int_0^t (\sigma\sigma^T)(x, y(s)) L^{W_1}(ds,\e x)dx,
\end{equation}
where the right hand side is the Riemann-Stieltjes integral with respect to the increasing function $t \mapsto L^{W_1}(t,\e x)$.
Lastly, we argue that this converges as $\e \to 0$ to
\[\int_0^t \left( \int_\reals (\sigma\sigma^T)(x,y(s)) dx \right) L^{W_1}(ds,0). \]

Similarly to (\ref{eq:finite_epsilon}),
\[
\frac{1}{\e} \int_0^t (\sigma\sigma^T)(\e^{-1} W_1(s),y(s)) \mathbbm{1}_{\{\e^{-1}|W_1(s)|\leq N\}} ds = \int_{-N}^N \int_0^t (\sigma\sigma^T)(x, y(s)) L^{W_1}(ds,\e x) ds,
\]
and consequently for any $N \in \nat$,
\[\begin{array}{l}
\ds{\left| \frac{1}{\e} \int_0^t (\sigma\sigma^T)(\e^{-1} W_1(s),y(s)) ds - \int_0^t \left( \int_\reals (\sigma\sigma^T)(x,y(s)) dx \right) L^{W_1}(ds,0) \right|}\\
\ds{\leq \left| \frac{1}{\e} \int_0^t (\sigma\sigma^T)(\e^{-1} W_1(s),y(s)) ds - \frac{1}{\e} \int_0^t (\sigma\sigma^T)(\e^{-1}W_1(s),y(s)) \mathbbm{1}_{\{\e^{-1}|W_1(s)|\leq N\}} ds \right|}\\
\ds{+ \left| \int_{-N}^N \int_0^t (\sigma\sigma^T)(x, y(s)) L^{W_1}(ds,\e x) dx - \int_{-N}^N \int_0^t (\sigma\sigma^T)(x,y(s)) L^{W_1}(ds,0) dx\right|}\\
\ds{+\left|\int_{-N}^N \int_0^t (\sigma\sigma^T)(x,y(s)) L^{W_1}(ds,0) dx - \int_\reals \int_0^t (\sigma\sigma^T)(x,y(s)) L^{W_1}(ds,0) dx\right|}\\
\ds{:=I_1 + I_2 + I_3}.
\end{array}\]
where $|.|=|.|_{L(\reals^d,\reals^d)}$ is the operator norm.

By Lemma \ref{lem:mylemma},
\[\begin{array}{l}
\ds{\sup_{\e \geq 0} \E I_1
%\sup_{\e \geq 0}} \E \left| \frac{1}{\e} \int_0^t (\sigma\sigma^T)(\e^{-1} W(s)%,y(s)) ds - \int_0^t (\sigma\sigma^T)(\e^{-1}W(s),y(s)) \mathbbm{1}_{\{\e^{-1}|W(s)|\leq N\}}ds \right|_{l^1(\reals^{d^2})}=\\
= \sup_{\e \geq 0} \E \frac{1}{\e} \left| \int_0^t (\sigma\sigma^T)(\e^{-1}W_1(s),y(s)) \mathbbm{1}_{\{\e^{-1}|W_1(s)|> N\}}ds \right|}\\
\ds{\leq \sup_{\e \geq 0} \E  \frac{1}{\e} \int_0^t \hat{\sigma}^2(\e^{-1}W_1(s)) \mathbbm{1}_{\{\e^{-1}|W_1(s)| > N\}} ds   }
\ds{\leq |\hat{\sigma}^2\chi_{\{|x|>N\}}|_{L^1(\reals)} \sqrt{t}.}
\end{array}\]
where we used that for the positive matrix $\sigma\sigma^T$,
\[
|\sigma\sigma^T|=\lambda_{max}\leq\sum_{i=1}^d\lambda_i=\Tr\sigma\sigma^T.
\]
Because $\hat{\sigma}^2 \in L^1(\reals)$, we can choose $N$ large enough to make this contribution arbitrarily small. Similarly, with probability $1$,
\[\lim_{N \to 0} \int_{-N}^N \int_0^t (\sigma\sigma^T)(x,y(s)) L^{W_1}(ds,0) dx = \int_\reals \int_0^t (\sigma\sigma^T)(x,y(s)) L^{W_1}(ds,0) dx \]
so that $I_3 \to 0$ almost surely.
Next, we claim that for any $N \in \nat$,
\[\lim_{\e \to 0} \int_{-N}^N \int_0^t (\sigma\sigma^T)(x, y(s)) L^{W_1}(ds,\e x) dx = \int_{-N}^N \int_0^t (\sigma\sigma^T)(x,y(s)) L^{W_1}(ds,0) dx.\]
Indeed,
\[\lim_{\e \to 0} \sup_{|x|\leq N} \sup_{0 \leq s \leq t} |L^{W_1}(s,\e x) - L^{W_1}(s,0)| = 0, \]
which implies the weak convergence of the Lebesgue-Stieltjes measures $L^{W_1}(ds,\e x)$ to $L^{W_1}(ds,0)$ uniformly for $|x|<N$. Therefore $I_2 \to 0$ with probability $1$.
%Therefore on a set of full measure

Because $I_1 \to 0$ in $L^1(\Omega)$, for any subsequence $\e_n \to 0$, there exists a further subsequence $\e_{n_k}$ such that
\[\lim_{k \to +\infty} \frac{1}{\e_{n_k}} \int_0^t  (\sigma\sigma^T)(\e_{n_k}^{-1}W_1(s), y(s)) ds = \int_\reals \int_0^t (\sigma\sigma^T)(x,y(s)) L^{W_1}(ds,0) dx\]
almost surely.

Then, from \eqref{eq:ptwise-char-funct} and the dominated convergence theorem, we see that
\begin{align*}
\lim_{k\to+\infty} \E &\left(\exp\left(i \left<\lambda,(\e_{n_k}^{-1/2} J^{\e_{n_k}}(t))\right>\right) \right)
=\\
&=\E \left(\exp \left(-\frac{1}{2} \int_0^t \int_{-\infty}^\infty \left< (\sigma\sigma^T)(x,y(s))\lambda, \lambda \right>dx L^{W_1}(ds,0) \right) \right).
\end{align*}
Since this is true for a subsequence of every sequence $\e_n \to 0$, we conclude that
\begin{align*}
\lim_{\e \to 0} \E &\left(\exp\left(i \left<\lambda,(\e^{-1/2} J^{\e}(t))\right>\right) \right)
=\\
&=\E \left(\exp \left(-\frac{1}{2} \int_0^t \int_{-\infty}^\infty \left< (\sigma\sigma^T)(x,y(s))\lambda, \lambda \right>dx L^{W_1}(ds,0) \right) \right).
\end{align*}

From Corollary \ref{cor:stoch-int-exist}, this is equal to the characteristic function of $\bar{J}(t)$ given by \eqref{eq:J-bar}.
We have proven that for fixed $t$,
\[\e^{-1/2}(J^\e(t)) \to (\bar{J}(t)) \text{ in distribution}.\]
By the same arguments, we can show that for any finite collection of times $0\leq t_1 < t_2 < ... < t_n$, the finite dimensional distributions
\[\e^{-1/2}\vec{J}^\e:= (\e^{-1/2}J^\e(t_1), ..., \e^{-1/2}J^\e(t_n)) \to (\bar{J}(t_1), ... \bar{J}(t_2)) \text{ in distribution}.\]
%Because $W_1$ and $W_2$ are independent, for any $u,v \in \reals^d$, and $t_k, t_j \geq 0$,
%\[\begin{array}{l}
%\ds{\E \left( \left< \e^{-1/2} J^\e(t_k) , u \right>_{\reals^d} \left<\e^{-1/2} J^\e(t_j), v \right>_{\reals^d} | W_1 \right)}\\
%\ds{\e^{-1} \int_0^{t_k \wedge t_j} \left< (\sigma\sigma^T)(\e^{-1}W(s), y(s)) u, v \right>_{\reals^d} ds.}
%\end{array}\]
%Using the same arguments as above, we can show that these covariances converge converge to
%\[\E \left( \left<\bar{J}(t_k), u \right>_{\reals^d} \left<\bar{J}(t_j),v \right>_{\reals^d} | W_1 \right)\]
%and we can conclude that the finite dimensional distributions converge as conditionally, they are multivariate normals.

It remains to show that the laws of $\{\e^{-1/2} J^\e\}$ are tight as a family of measures on $C([0,T];\reals^n)$. Exactly as in (\ref{eq:I3}),  we have
\[
\ds{\sup_{\e > 0} \E \left|\e^{-1/2} J^\e(t) - \e^{-1/2} J^\e(s) \right|_{\reals^d}^p \leq  C_{p}}|t-s|^{\frac{p}{4}}\qquad p\geq 1
\]
and therefore this family of measures is tight by the Kolmogorov test. Since the family is tight and all of the finite dimensional distributions converge, our conclusion follows.

\end{proof}

Now introduce the process $Z^{\varepsilon}$ that uniquely solves the integral equation
\begin{equation}\label{eq:Z_equation}
Z^{\varepsilon}(t)=y_0+\int_0^tb_1(Z^{\varepsilon}(s))ds+J^{\varepsilon}(t).
\end{equation}
The following lemma shows that the general case can be reduced to studying \eqref{eq:Z_equation}.

\begin{Lemma}\label{lem:reducing_to_simple}
We have
\[
\varepsilon^{-1}\E\sup_{t\in[0,T]}|Y^{\varepsilon}(t)-Z^{\varepsilon}(t)|^2\to 0\qquad \varepsilon\to 0.
\]
\end{Lemma}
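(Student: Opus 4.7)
The plan is to subtract the integral forms of \eqref{eq:Y_eq} and \eqref{eq:Z_equation} and write $Y^\e(t)-Z^\e(t)=I_1(t)+I_2(t)+I_3(t)$, where
\[I_1(t)=\int_0^t\big[b_1(Y^\e(s))-b_1(Z^\e(s))\big]ds,\qquad I_2(t)=\int_0^t b_2(\e^{-1}W_1(s),Y^\e(s))ds,\]
\[I_3(t)=\int_0^t\big[\sigma(\e^{-1}W_1(s),Y^\e(s))-\sigma(\e^{-1}W_1(s),y(s))\big]dW_2(s).\]
The idea is to show $\E\sup_{t\le T}|I_2|^2$ and $\E\sup_{t\le T}|I_3|^2$ are both $o(\e)$, after which Lipschitz continuity of $b_1$ and Gronwall close the argument. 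The $I_2$ term is easy: $|b_2(x,y)|\le\hat b(x)$, so Lemma \ref{lem:mylemma} with $p=2$ gives
\[\E\sup_{t\in[0,T]}|I_2(t)|^2\le\E\Big(\int_0^T\hat b(\e^{-1}W_1(s))ds\Big)^2\le C\e^2T,\]
which is even $O(\e^2)$.

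The main obstacle is $I_3$. The Burkholder-Davis-Gundy inequality bounds $\E\sup_{t\le T}|I_3(t)|^2$ by a multiple of $\E\int_0^T\Tr\big[(\sigma(\e^{-1}W_1,Y^\e)-\sigma(\e^{-1}W_1,y))(\cdot)^T\big]ds$. The crude Lipschitz bound $\Tr[(\sigma(\cdot,Y^\e)-\sigma(\cdot,y))(\cdot)^T]\le\mathrm{Lip}(\sigma)^2|Y^\e-y|^2$ combined with Lemma \ref{lem:Y_close_to_y} only yields $O(\e)$, which is insufficient. To extract an additional vanishing factor I split the integrand via $\mathbbm{1}_{\{|\e^{-1}W_1(s)|>N\}}+\mathbbm{1}_{\{|\e^{-1}W_1(s)|\le N\}}$ with $N=N(\e)\to\infty$ chosen below. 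On the far set I use the uniform bound $\Tr[(\sigma(\cdot,Y^\e)-\sigma(\cdot,y))(\cdot)^T]\le 4\hat\sigma^2(\e^{-1}W_1)$ and apply Lemma \ref{lem:mylemma} (with $p=1$) to $\hat\sigma^2\chi_{\{|x|>N\}}\in L^1(\reals)$, whose $L^1$-norm tends to $0$ as $N\to\infty$; the contribution is at most $C\e\,|\hat\sigma^2\chi_{\{|x|>N\}}|_{L^1(\reals)}\sqrt{T}$. On the near set I use Lipschitz together with Cauchy-Schwarz on $L^2(\Omega\times[0,T])$:
\[\mathrm{Lip}(\sigma)^2\Big(\E\int_0^T|Y^\e-y|^4ds\Big)^{1/2}\Big(\E\int_0^T\mathbbm{1}_{\{|\e^{-1}W_1|\le N\}}ds\Big)^{1/2}\le C\e^{3/2}N^{1/2}T^{3/4},\]
where the first factor uses Lemma \ref{lem:Y_close_to_y} with $p=4$ (giving $\E|Y^\e-y|^4\le C\e^2$) and the second uses Lemma \ref{lem:mylemma} applied to $\chi_{[-N,N]}$ (whose $L^1$-norm is $2N$). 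Choosing $N(\e)=\e^{-1/2}$ (or any $N(\e)\to\infty$ with $\e N(\e)\to 0$) makes both pieces $o(\e)$, so $\E\sup_{t\le T}|I_3(t)|^2=o(\e)$.

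Finally, for the feedback term, Cauchy-Schwarz and Lipschitz continuity of $b_1$ give $|I_1(t)|^2\le T\,\mathrm{Lip}(b_1)^2\int_0^t|Y^\e(s)-Z^\e(s)|^2 ds$. Setting $u(t)=\E\sup_{s\in[0,t]}|Y^\e(s)-Z^\e(s)|^2$ and combining the above estimates,
\[u(t)\le 3T\,\mathrm{Lip}(b_1)^2\int_0^tu(s)\,ds+o(\e),\]
and Gronwall's inequality yields $u(T)=o(\e)$, i.e.\ $\e^{-1}\E\sup_{t\in[0,T]}|Y^\e(t)-Z^\e(t)|^2\to 0$ as $\e\to 0$.
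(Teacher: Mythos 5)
Your proof is correct and follows essentially the same route as the paper: the same decomposition into $I_1,I_2,I_3$, the same use of Lemma \ref{lem:mylemma} for the $b_2$ term, and the same splitting of the martingale term on $\{|\e^{-1}W_1|\le N\}$ versus its complement with Cauchy--Schwarz and Lemma \ref{lem:Y_close_to_y} on the near set. The only cosmetic difference is that you take an explicit $N=N(\e)\to\infty$ with $\e N(\e)\to 0$, whereas the paper fixes $N$ large and then sends $\e\to 0$ for an arbitrary $\delta$; both yield the same $o(\e)$ bound before Gronwall.
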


\begin{proof}
Note that if $K^{\varepsilon}(t)=\varepsilon^{-1/2}(Y^{\varepsilon}(t)-Z^{\varepsilon}(t))$ then
\begin{align*}
K^{\varepsilon}(t)&=\varepsilon^{-1/2}\int_0^t\left(b_1(Y^{\varepsilon}(s))-b_1(Z^{\varepsilon}(s))\right)ds+\varepsilon^{-1/2}\int_0^tb_2(\varepsilon^{-1}W_1(s),Y^{\varepsilon}(s))ds+\\
&+\varepsilon^{-1/2}\int_0^t(\sigma(\varepsilon^{-1}W_1(t),Y^{\varepsilon}(s))-\sigma(\varepsilon^{-1}W_1(t),y(s)))dW_2(t)=I_1^{\varepsilon}(t)+I^{\varepsilon}_2(t)+I^{\varepsilon}_3(t).
\end{align*}
For the second term, we write
\[
\mathrm{E}\sup_{t\in[0,T]}|I^{\varepsilon}_2(t)|^2\leq\mathrm{E}\frac{1}{\varepsilon}\left(\int_0^T\hat{b}(\varepsilon^{-1}W_1(s))ds\right)^2\leq \varepsilon C_{p} T,
\]
where in the last inequality we used Lemma \ref{lem:mylemma} with $p=2$.

On the other hand, by Doob's inequality, %the form of the scalar quadratic variation
\begin{align*}
\mathrm{E}\sup_{t\in[0,T]}|I_3^{\varepsilon}(t)|^2\leq \frac{4}{\varepsilon}\mathrm{E}\int_0^T\Tr\tilde{\sigma}\tilde{\sigma}^T(t)dt=M^{\varepsilon}_>+M^{\varepsilon}_<,
\end{align*}
where $M^{\varepsilon}_>$ (resp. $M^{\varepsilon}_<$) is the part of the sum-integral when $|\varepsilon^{-1}W_1(t)|>N$ (resp. $|\varepsilon^{-1}W_1(t)|\leq N$) and
\[
\tilde{\sigma}(t)=\sigma(\e^{-1}W_1(t),Y^{\e}(t))-\sigma(\e^{-1}W_1(t),y(t)).
\]

We have by a simple calculation
\[
M^{\varepsilon}_>\leq \frac{16}{\varepsilon}\mathrm{E}\int_0^T\mathbbm{1}_{\{W_1(t)>\varepsilon N\}}\hat{\sigma}^2(\varepsilon^{-1}|W_1(t)|)dt\leq C_{T}|\hat{\sigma}\mathbbm{1}_{\{|x|>N\}}|_{L^2(\mathbb{R})}^2
\]
where we used Lemma \ref{lem:mylemma} with $p=2$ and $\psi(x)=\hat{\sigma}^2(x)\mathbbm{1}_{\{|x|>N\}}$. Pick $\delta>0$ and let $N$ be so large such that this is less than $\delta/2$.

For the other term, we use the Lipschitz continuity of $\sigma$ and the Cauchy-Schwartz inequality to write
\[
M^{\varepsilon}_<\leq C_{Lip(\sigma)}\left(\frac{1}{\varepsilon}\int_0^T\mathrm{E}|Y^{\varepsilon}(s)-y(s)|^4ds\right)^{1/2}\left(\mathrm{E}\frac{1}{\varepsilon}\int_0^T\mathbbm{1}_{\{\varepsilon^{-1}|W_1(s)|\leq N\}}ds\right)^{1/2}.
\]
The second term in the product is bounded by $CN^{1/2}T^{1/4}$, while the first one is $\mathcal{O}(\sqrt{\varepsilon})$. Indeed, Lemma \ref{lem:Y_close_to_y} implies $E|Y^{\varepsilon}(t)-y(t)|^4<C_{T}\varepsilon^2$. Now choose $\epsilon$ small enough such that $M^{\varepsilon}_<<\delta/2$. The result now follows from the Lipschitz continuity of $b_1$ and Gronwall's lemma since $\delta$ is arbitrary.
\end{proof}

\section{Proof of Theorem \ref{main_result2}}\label{sec:proof_of_main_result}

We are going to prove that the processes $\varepsilon^{-1/2}(Z^{\varepsilon}(t)-y(t))$ converge weakly to the limit in (\ref{eq:the_solution}). It is not hard to see that as a consequence of the tightness of $\varepsilon^{-1/2}J^{\varepsilon}$, the Lipschitz continuity of $b_1$, Lemma \ref{lem:reducing_to_simple} and Lemma \ref{lem:Y_close_to_y}, we also have

\begin{Lemma}\label{lem:tightness}
The family of processes  $\varepsilon^{-1/2}(Z^{\varepsilon}(t)-y(t))$ is tight in $C([0,T])$.
\end{Lemma}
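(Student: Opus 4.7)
The plan is to transfer tightness pathwise from the family $\{\e^{-1/2}J^\e\}$, which is tight in $C([0,T];\reals^d)$ by the argument at the end of Lemma \ref{lem:easiest}, to $\{\eta^\e\}$, where $\eta^\e(t):=\e^{-1/2}(Z^\e(t)-y(t))$, using only the Lipschitz continuity of $b_1$ and Gronwall's inequality. The main observation is that, after subtracting the ODE $\dot y=b_1(y)$ from \eqref{eq:Z_equation} and rescaling, $\eta^\e$ satisfies the fixed-point type identity
\[
\eta^\e(t)=\int_0^t \e^{-1/2}\bigl(b_1(y(s)+\sqrt{\e}\,\eta^\e(s))-b_1(y(s))\bigr)ds+\e^{-1/2}J^\e(t),
\]
whose integrand is bounded pathwise by $\textnormal{Lip}(b_1)\,|\eta^\e(s)|$.

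From this I will first derive the uniform bound $\sup_{t\leq T}|\eta^\e(t)|\leq e^{T\textnormal{Lip}(b_1)}\sup_{t\leq T}|\e^{-1/2}J^\e(t)|$ by a pathwise application of Gronwall's lemma, and then use the identity again to estimate increments,
\[
|\eta^\e(t)-\eta^\e(s)|\leq \textnormal{Lip}(b_1)(t-s)\sup_{r\leq T}|\eta^\e(r)|+|\e^{-1/2}(J^\e(t)-J^\e(s))|.
\]
Combining these gives that the modulus of continuity $\omega(\eta^\e,\rho)$ is pathwise dominated by $C_T\,\rho\,\sup_{r\leq T}|\e^{-1/2}J^\e(r)|+\omega(\e^{-1/2}J^\e,\rho)$, so both the uniform norm and the modulus of continuity of $\eta^\e$ are controlled entirely by those of $\e^{-1/2}J^\e$.

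To conclude, I will verify the Arzel\`a--Ascoli criterion for tightness in $C([0,T];\reals^d)$: pointwise tightness at $t=0$ is immediate since $\eta^\e(0)=0$, and stochastic equicontinuity follows from the pathwise bound above together with the fact that tightness of $\{\e^{-1/2}J^\e\}$ supplies a uniform-in-$\e$ probabilistic bound for $\sup_{r\leq T}|\e^{-1/2}J^\e(r)|$ and a uniform-in-$\e$ control of $\omega(\e^{-1/2}J^\e,\rho)$ as $\rho\to 0$. Given $\beta,\delta>0$, I would first choose $M$ so large that $\sup_{r}|\e^{-1/2}J^\e(r)|\leq M$ with probability at least $1-\delta/2$ uniformly in $\e$, then take $\rho$ small enough that $C_T M\rho<\beta/2$ and $\P(\omega(\e^{-1/2}J^\e,\rho)>\beta/2)<\delta/2$; together these yield $\P(\omega(\eta^\e,\rho)>\beta)<\delta$ uniformly in $\e$.

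I do not anticipate a serious obstacle: the argument is essentially mechanical once one notes that the Gronwall transfer preserves both the uniform norm and the modulus of continuity. The Lemmas on $|Y^\e-y|$ and $|Y^\e-Z^\e|$ cited in the paragraph introducing the statement do not appear in this deduction, but would enter in an alternative Kolmogorov-type proof based on moment bounds for $\eta^\e(t)-\eta^\e(s)$ obtained via the decomposition $Z^\e-y=(Z^\e-Y^\e)+(Y^\e-y)$.
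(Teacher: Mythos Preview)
Your argument is correct and is essentially the mechanism the paper has in mind: the paper gives no detailed proof, only the one-line remark that tightness follows from tightness of $\e^{-1/2}J^\e$, the Lipschitz continuity of $b_1$, and Lemmas \ref{lem:reducing_to_simple} and \ref{lem:Y_close_to_y}. Your pathwise Gronwall transfer is exactly what makes the first two ingredients do the work, and as you correctly observe it already suffices on its own --- the two additional lemmas are not actually needed here and would only enter via the alternative moment/Kolmogorov route you sketch at the end. So your proof is a faithful and in fact slightly more economical fleshing-out of the paper's hint.
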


Now we prove Theorem \ref{main_result2}.
\begin{proof}[Proof of Theorem \ref{main_result2}]
Let $\e_n$ be any sequence converging to $0$. By Lemma \ref{lem:tightness}, there is a subsequence $\e_{n_k}$ such that the laws of $\{\e_{n_k}^{-1/2} (Z^{\e_{n_k}}(t) - y(t))\}$ converge weakly. By the Skorokhod representation theorem and Lemma \ref{lem:easiest}, we can assume that $\varepsilon_{n_k}^{-1/2}(Z^{\varepsilon_{n_k}}-y)\to\bar{Z}$ and $\varepsilon_{n_k}^{-1/2}J^{\varepsilon_{n_k}}\to \bar{J}$ in $C[0,T]$ almost surely on some probability space.

By Taylor approximation, we have
\[
\frac{Z^{\varepsilon_{n_k}}(t)-y(t)}{\sqrt{\varepsilon_{n_k}}}=\int_0^tD_xb_1(y(s))\frac{Z^{\varepsilon_{n_k}}(s)-y(s)}{\sqrt{\varepsilon_{n_k}}}ds+\frac{1}{\sqrt{\varepsilon_{n_k}}}J^{\varepsilon_{n_k}}(t)+R(t,\varepsilon_{n_k}),
\]
where
\[
\mathrm{E}\sup_{t\in[0,T]}|R(t,\varepsilon_{n_k})|\leq C \int_0^T\varepsilon_{n_k}^{-1/2}\mathrm{E}|Z^{\varepsilon_{n_k}}(u)-y(u)|^2du.
\]
The constant in the above formula depends on the second derivatives of $b_1$ which are bounded by assumption. Taking $k\to\infty$ and using Lemma \ref{lem:reducing_to_simple} and Lemma \ref{lem:Y_close_to_y}, the above expression converges to
\[\bar{Z}(t) = \int_0^t D_xb_1(y(s)) \bar{Z}(s) ds  + \bar{J}(t).\]
By uniqueness, $\bar{Z} = \zeta^0$ in distribution where $\zeta^0$ is defined by \eqref{eq:the_solution}.
Because our original sequence $\e_n$ was arbitrary,
\[\lim_{\e \to 0} \e^{-1/2} (Z^\e - y) = \bar{Y} \text{ in distribution}.\]
Finally, by Lemma \ref{lem:reducing_to_simple}, $\e^{-1/2}(Y^\e(t) - y(t))$ and $\e^{-1/2} ( Z^\e(t) - y(t))$ have the same limit, so our result follows.
\end{proof}

\section{Proof of Corollary \ref{cor:non_unit_varphi}}\label{sec:non_unit_varphi}

 We first prove Corollary \ref{cor:non_unit_varphi} in the case where $b_1\equiv 0$. That is, in the case where the $Y^\e(t) \to 0$ uniformly on bounded time intervals. In this case, we can time change the system $(X^\e,Y^\e)$ which solves \eqref{eq:X_eq2}-\eqref{eq:Y_eq2} into a system where the fast motion is Brownian.
 
 Let $(X^{\varepsilon}(t),Y^{\varepsilon}(t))$ solve \eqref{eq:X_eq2} and \eqref{eq:Y_eq2} with $b_1\equiv0$, and introduce the time-change
\[
s^{\varepsilon}(t)=\int_0^t(\psi_1(Y^{\e}_u)+\psi_2(X^{\varepsilon}(u),Y^{\varepsilon}(u)))^2du.
\]
Denote the inverse of $s^\e(t)$ by $t^{\varepsilon}(s)$. If we set $\tilde{X}^\e(s) = X^\e(t^\e(s))$, then it becomes fast Brownian motion, in the sense that there exists $\tilde{W}_1$ such that $\tilde{X}^\e(s) = \e^{-1}\tilde{W}_1(s)$. If we set $\tilde{Y}^\e(s) = Y^\e(t^\e(s))$, then it solves the SDE
\[d \tilde{Y}^\e(s) = \tilde{b}(\e^{-1}\tilde{W}_1(s), \tilde{Y}^\e(s)) ds + \tilde{\sigma}(\e^{-1} \tilde{W}_1(s), \tilde{Y}^\e(s)) dW_2(s),\]
where
\[
\tilde{b}(x,y)=\frac{b_2(x,y)}{(\psi_1(y)+\psi_2(x,y))^2}%=\frac{b_1(y)}{\psi_1^2(y)}+\tilde{b}_2(x,y),%=\tilde{b}_1(y)+\tilde{b}_2(x,y),
\]
and
\[
\tilde{\sigma}(x,y)=\frac{\sigma(x,y)}{(\psi_1(y)+\psi_2(x,y))}.
\]
By \eqref{eq:psi_cond} and \eqref{eq:new_sigma_cond}, we have $\sup_{y\in\mathbb{R}^d}\tilde{b}_2(.,y)$, and $ \sup_{y\in\mathbb{R}^d}\Tr\tilde{\sigma}\tilde{\sigma}^T(.,y)\in L^1(\mathbb{R})$.

By Theorem \ref{main_result2}, we have that if %$\tilde{y}(s)$ is the solution of the ODE $\dot{\tilde{y}}=b_1(\tilde{y})/\psi_1^2(\tilde{y})$ and
 $\tilde{\zeta}^{\e}(s)=\frac{1}{\sqrt{\e}}(\tilde{Y}^{\e}(s)$, then we have
\begin{equation}\label{eq:pre_tc_conv}
(\varepsilon \tilde{X}^{\e}(s),\tilde{\zeta}^{\e}(s))\stackrel{\e\to 0}\Rightarrow(\tilde{W_1}(s),\tilde{\zeta}^0(s))\qquad \textrm{in  } C([0,\infty),\mathbb{R}^{1+d}),
\end{equation}
where $\tilde{\zeta}^0(s)$ is the limit as in Theorem \ref{main_result2} but with $\tilde{b}_1\equiv 0$, $\tilde{\sigma}$ and some $\tilde{W}_2$ independent of $\tilde{W}_1$.

Using \eqref{eq:psi_cond}, Lemma \ref{lem:mylemma}, and Lemma \ref{lem:Y_close_to_y}, it is not hard to see that
\[
t^{\e}(s)=\int_0^s\frac{1}{(\psi_1(\tilde{Y}^{\e}(v))+\psi_2(\e^{-1}\tilde{W_1}(v), \tilde{Y}^{\e}(v)))^2}dv\to \int_0^s\frac{1}{\psi^2_1(\tilde{y}(v))}dv=:t^0(s),
\]
where the convergence takes place in $L^2(\Omega)$ uniformly on bounded intervals. This also implies, by \eqref{eq:psi_cond}, that
\begin{equation}\label{eq:limit_tc}
s^{\e}(t)\to s^{0}(t)=\int_0^t\psi_1^2(y(u))du\qquad \textrm{in } L^2(\Omega)\textrm{, uniformly on bounded intervals}.
\end{equation}
Therefore, by reversing the time change, we see that
\[(\e X^\e(t), \zeta^\e(t)) = ( \tilde{W}_1^\e(s^\e(t)), \tilde{\zeta}^\e(s^\e(t))) \Rightarrow (\tilde{W}_1(s^0(t)), \tilde{\zeta}^0(s^0(t)))=: (X^0, \zeta^0).\]
Since $s^0$ is a deterministic time-change,
\begin{align*}
&\int_{-\infty}^{\infty}f(x)L^{\tilde{W}_1}(s^0(t),x)dx=\int_0^{s^0(t)}f(\tilde{W}_1(u))du=\\
&=\int_0^tf(\tilde{W}_1(s^0(v)))\psi_1^2(y(v))dv=\int_0^tf(\tilde{W}_1(s^0(v)))d\langle\tilde{W}_1(s^0(v))\rangle = \int_0^tf(\tilde{W}_1(s^0(v)))d\langle X^0\rangle,
\end{align*}
for every bounded, measurable test functions, where $\langle.\rangle$ denotes the quadratic variation. Therefore, $L^X(t,x)=L^{\tilde{W}_1}(s^0(t),x)$ is the local time of $\tilde{W}_1(s^0(v))$. This implies that $\tilde{V}(s^0(t))=\tilde{W}_2(L^X(t,0))$.

Using this, \eqref{eq:pre_tc_conv}, \eqref{eq:limit_tc}, and that $s^0$ is a deterministic time change, the proof can be concluded.

If $b_1 \not \equiv 0$, then the time change argument described above does not immediately convert $(\tilde{X}^\e, \tilde{\zeta}^\e)$ into a pair that fulfills the assumptions of Theorem \ref{main_result2}. The main reason for this is that the time change converts $y(t)$, which is the solution to a deterministic ODE, into $\tilde{y}(s):= \tilde{y}(t^\e(s))$ which solves
\[\frac{d \tilde{y}}{ds} = \frac{b_1(\tilde{y}(s))}{(\psi_1(\tilde{Y}^\e(s)) + \psi_2(\tilde{X}^\e(s), \tilde{Y}^\e(s)))^2}.\] 

Before doing the time change, we notice that $\zeta^\e(t) := \e^{-1/2}(Y^\e(t) - y(t))$ solves
\begin{align*}
  d\zeta^\e(t) = &\e^{-1/2} (b_1(y(t) + \sqrt{\e}\zeta^\e(t)) - b_1(y(t))) dt + \e^{-1/2} b_2(X^\e(t), y(t) + \sqrt{\e}\zeta^\e(t)) dt\\ 
  &+ \e^{-1/2} \sigma(X^\e(t), y(t)+\sqrt{\e}\zeta(t)) dW_2(t).
\end{align*}
We can prove Corollary \ref{cor:non_unit_varphi} by analyzing these three terms separately. By the properties of derivatives, if $\zeta^\e \to \zeta^0$, the first term
\[\int_0^T \e^{-1/2} (b_1(y(t) + \sqrt{\e}\zeta^\e(t)) - b_1(y(t))) dt \to \int_0^T D_y b_1(y(t))\cdot\zeta^0(t) dt.\]
We can then use the tightness of $\{\zeta^\e\}$ to extract a convergent subsequence.
The second term with $b_2$ converges to zero uniformly on bounded intervals of time because of Lemma \ref{lem:mylemma}. Finally, we can analyze the stochastic term using the time change argument described at the beginning of this section. The results of Corollary \ref{cor:non_unit_varphi} will follow. 

\subsubsection*{Acknowledgements}

{\small
The authors are grateful to D.\ Dolgopyat for introducing them to the problem and to L. Koralov and D. Dolgopyat for their helpful suggestions during invaluable discussions and for proofreading the manuscript. While working on the paper, Z.\ Pajor-Gyulai was partially supported by the NSF grants number 1309084 and DMS1101635. M. Salins was partially supported by the NSF grant number 1407615.
}

\end{document}